\newcommand{\E}{\mathbb{E}}
\renewcommand{\Pr}{\mathbb{P}}
\newcommand{\tr}{{\top}}
\newcommand{\defeq}{:=}
\newcommand{\matL}{\Lambda}
\newcommand{\matM}{M}
\newcommand{\matG}{\Gamma}
\newcommand{\matGl}{\matG_0}
\newcommand{\matGr}{\matG_1}
\newcommand{\matT}{T}
\newcommand{\matI}{I}
\newcommand{\matH}{H}
\newcommand{\matS}{S}
\newcommand{\matA}{A}
\newcommand{\matB}{B}
\newcommand{\matZ}{Z}
\newcommand{\matU}{U}
\newcommand{\matY}{Y}
\def\provideenvironment{\@star@or@long\provide@environment}
\def\provide@environment#1{%
        \@ifundefined{#1}%
                {\def\reserved@a{\newenvironment{#1}}}%
                {\def\reserved@a{\renewenvironment{dummy@environ}}}%
        \reserved@a
}
\def\dummy@environ{}
\theoremstyle{plain}
\newtheorem{theorem}{Theorem}[section]
\newtheorem{lemma}[theorem]{Lemma}
\newtheorem{proposition}[theorem]{Proposition}
\theoremstyle{definition}
\theoremstyle{remark}
\newtheorem*{remark}{Remark}
\begin{document}

\title{Sojourn time in a single server queue with threshold service rate control}

\author{
  Ivo Adan%
\footnote{
 Mechanical Engineering Department, Technische Universiteit Eindhoven,
 Postbus 513, 5600 MB EINDHOVEN,
 Tel.: +31-40-247 2932,
 {iadan@tue.nl}           
}
\and 
  Bernardo D'Auria%
\footnote{
 Statistics Department, Madrid University Carlos III, 
 Avda. Universidad, 30. 28911 Legan\'es (Madrid) Spain,
 Tel.: +34 916 248 804,
 {bernardo.dauria@uc3m.es}          
}
\thanks{The research was partially supported by the 
Spanish Ministry of Education and Science Grants
MTM2010-16519, 
SEJ2007-64500, 
and the Dutch Star grant of October 2013. 
The second author wants to thank the research institutes
ICMAT (Madrid, Spain) and EURANDOM (Eindhoven, The Netherlands)
for kindly hosting him during the development of this project.}
 }

\maketitle

\begin{abstract}
We study the sojourn time in a queueing system with a single exponential server, serving a Poisson stream of customers in order of arrival.
Service is provided at low or high rate, which can be adapted at exponential inspection times. When the number of customers in the system is above a given threshold, the service rate is upgraded to the high rate, and otherwise, it is downgraded to the low rate.
The state dependent changes in the service rate make the analysis of the sojourn time a challenging problem, since the sojourn time now also depends on future arrivals. We determine the Laplace transform of the stationary sojourn time and describe a procedure to compute all moments as well. First we analyze the special case of continuous inspection, where the service rate immediately changes once the threshold is crossed. Then we extend the analysis to random inspection times. This extension requires the development of a new methodological tool, that is \emph{matrix generating functions}. The power of this tool is that it can also be used to analyze generalizations to phase-type services and inspection times.
\end{abstract}

\begin{keywords}%
Sojourn time distribution, Matrix generating function, Adaptable service speed
\end{keywords}

\begin{AMS}%
60K25,60K37,60D05.
\end{AMS}

\section{Introduction}
\label{intro}
We consider a single-server queueing system, where customers arrive according to a Poisson stream with rate $\lambda$ and receive service in order of arrival. The service requirements are exponential with mean $1$. The rate of the server can be either $\mu_0$, or $\mu_1$ and this service rate can be adapted at random inspection times that occur according to a Poisson stream with rate $\gamma$. 
For convenience, we think of $\mu_1$ as the fastest rate, i.e. $\mu_1 > \mu_0$, even if under the stability conditions this assumption may be removed. When the number of customers in the system is above the threshold $K$, the service rate is upgraded to the high rate $\mu_1$, and otherwise, it is downgraded to the low rate $\mu_0$. An important performance measure is the sojourn time. In this paper we aim to determine its stationary distribution. This is a challenging problem, since due to adaptable service rate, the sojourn time does not only depend on the state seen at arrival, but it also depends on future arrivals.

There is a considerable literature on the analysis of single server queueing systems with variable service rates, see e.g. \cite{asmussen,bekker1,cohen1,cohen2,gaver,harrison}. Those studies often assume that the service rates can be continuously adapted based on the queue content, and focus on the calculation of the steady-state workload distribution.
An exponential multi-server system is considered in \cite{mitrani}, with the feature that a reserved block of servers can be switched on (which takes an exponential switch-on time) when the number of customers in the system exceeds a certain threshold, and this block is immediately switched off when the number drops below another threshold. The emphasis in \cite{mitrani} is on the trade-off between the mean sojourn time and operating costs of the servers. In \cite{bekker2}, the stationary distribution of the workload is determined for an $M/G/1$ queue, where the service rate can not be continuously adapted, but only right after customer arrivals. In the literature, systems with adaptable service speed at inspection times have already been analyzed, we refer the reader to  \cite{bekker3,boxma} and the references therein.

The model with inspection rate $\gamma<\infty$ can be handled by considereing a two-stage birth-death process. This kind of model usually shows up in the analysis of retrial queues, where the state of the system has to keep track of the size of the retrial orbit. We refer the reader to the survey \cite{falin3}. In \cite{falin2}, the number of retrials of a generic customer is analyzed, which is a quantity directly related to the sojourn time and which depends on future arrivals to the system. Paper \cite{falin2} starts the analysis with a matrix equation that is similar to the one appearing in Section \ref{sc:inspection-times}, but it is able to reduce this equation to a scalar one by exploiting the fact that the retrial system has no buffer and only one server. Multi-channel systems are much more complicated to analyze and very little results are available about the sojourn time. Generally, what makes retrial systems more complicated than the one analyzed here, is the property that the rate at which retrial customers arrive to the system is proportional to the size of the orbit. This phenomenon is not appearing in our system, which is one of the reasons why our analysis is feasible.

As mentioned above, the focus in the current paper is on the sojourn time, not on the workload or number of customers in the system.
In Section \ref{sc:immediate-switching}, we first consider the special case of continuous inspection (so $\gamma = \infty$), where the service rate immediately changes once the threshold $K$ is crossed. This assumption simplifies the model, though it still contains the complication that the sojourn time depends on future arrivals. For the case of continuous inspection, we determine the Laplace transform of the stationary sojourn time and describe a procedure to compute all moments as well. The computation of the Laplace transform requires a recursive scheme and for the case $\gamma<\infty$ 
the Laplace transform can be expressed in terms of matrix functions that can be computed as solutions of a linear matrix system.

Then, in Section \ref{sc:inspection-times}, we proceed by extending the analysis to random inspection times occurring according a Poisson stream with rate $\gamma < \infty$. This extension, however, is not straightforward, but it requires the development of a new methodological tool, that is \emph{matrix generating functions}. By employing this tool we are able to find an expression for the Laplace transform of the stationary sojourn time, involving finitely many terms which can be recursively calculated. The analytical results are illustrated by numerical examples.

\section{Model with continuous inspection}\label{sc:immediate-switching}

In this section we first consider the special case of continuous inspection, so $\gamma=\infty$. This implies that whenever the number of customers in the system exceeds the threshold $K > 0$, the rate of the server is immediately upgraded from the low rate $\mu_0$ to the high rate $\mu_1 > \mu_0$. As soon as the number of customers in the system becomes less or equal to $K$, the rate of the server is reduced to the low rate $\mu_0$ again.

Denoting by $Q(t)$, the number of customers in the system at time $t>0$, we have that the process is a continuous time Markov chain, the transition diagram of which is depicted in Figure \ref{fig:trans.diag.immediate}.
\begin{figure*}[h]\centering
\begin{tikzpicture}[->,shorten >=1pt,node distance=.75cm,
state/.style={circle,draw,minimum size=0.85cm}]
\tiny
\node[state] (0) [] {$0$};
\node[state] (1) [right=of 0] {$1$};
\node[state,draw=none] (2) [right=of 1] {$\ldots$};
\node[state] (K) [right=of 2] {$K$};
\node[state] (K+1) [right=of K] {$K+1$};
\node[state] (K+2) [right=of K+1] {$K+2$};
\node[state,draw=none] (K+3) [right=of K+2] {$\ldots$};
\path [every node/.style={font=\tiny}]
	(0)  edge [bend left] node [above] {$\lambda$} (1);
\path [every node/.style={font=\tiny}]
	(1)	 edge [bend left] node [above] {$\lambda$} (2)
	     edge [bend left] node [below] {$\mu_0$} (0)
	(2)	 edge [bend left] node [above] {$\lambda$} (K)
	     edge [bend left] node [below] {$\mu_0$} (1)
	(K)    edge [bend left] node [above] {$\lambda$}(K+1)
	       edge [bend left] node [below] {$\mu_0$} (2)
	(K+1)  edge [bend left] node [above] {$\lambda$}(K+2)
	     edge [bend left] node [below] {$\mu_1$} (K)
	(K+2)  edge [bend left] node [above] {$\lambda$}(K+3)
	       edge [bend left] node [below] {$\mu_1$} (K+1)
	(K+3)  edge [bend left] node [below] {$\mu_1$} (K+2);
\end{tikzpicture}
\caption{Transition diagram for continuous ispection model}
\label{fig:trans.diag.immediate}       
\end{figure*}

Denoting by $Q^*$ the stationary number of customers in the system, we have that its distribution is given by
\begin{equation}\label{eq:imm.stat.dist}
\pi_n
= \Pr(Q^* = n)
= \left\{
\begin{array}{rl}
(\lambda/\mu_0)^n \, \pi_0 & \quad \mbox{for } n \leq K \\
(\mu_1/\mu_0)^K \,(\lambda/\mu_1)^n \, \pi_0 & \quad \mbox{for } n > K
\end{array}
\right. \ ,
\end{equation}
and under the stability assumption $\mu_1>\lambda$, the value of $\pi_0$ is given by
\begin{equation}\label{eq:pi_0}
\pi_0
= \left(
\sum_{n=0}^K (\lambda/\mu_0)^n +
\frac{\lambda}{\mu_1-\lambda} \, (\lambda/\mu_0)^K
\right)^{-1} \, .
\end{equation}

We aim to compute the distribution of the sojourn time of a typical customer that arrives to the system in stationary regime. Note that, in order to do this, 
we can not use Little's distributional law \cite{keilson}, since future arrivals may affect the sojourn times of the customers already present in the system by inducing a change in the service rate.

As shown in Figure \ref{fig:2}, we identify a \emph{tagged} customer in the queue by a pair of numbers $(n,m)$, where $n$ stands for the position of the tagged customer in the queue, and where $m$ denotes the number of customers behind him. 
We denote the sojourn time of this customer $(n,m)$ by $S(n,m)$. The stationary 
sojourn time is denoted by $S^*$.

\begin{figure*}[h]\centering
\begin{tikzpicture}[%
cust/.style={fill=lightgray},
tCust/.style={fill=darkgray, text=white},
buff/.style={double,very thick,dashed},
incBuff/.style={thick,dashed},
server/.style={}]
\def\queueSlot{*.9}
\def\serverSize{.65\queueSlot}
\def\refSlot{.6*\serverSize}
\def\custSize{*.45\queueSlot}
\def\custText{0\custSize}
\def\queueSizeY{1.5*\serverSize}
\draw [->] (1,0) ++(-1.5\queueSlot-\refSlot,0) -- +(1,0) node [above, pos=0.5] {$\lambda$}; 
\def\mu_1mSlot{6.5}
\coordinate (Q1) at (7,0);
\draw [server] (Q1) circle (\serverSize);
\draw [cust] (Q1) circle (1\custSize)
		     ++(0,\custText) node {1};  
\draw [cust] (Q1) ++(-1\queueSlot-\refSlot,0) circle (1\custSize)
		     ++(0,\custText) node {2}; 
\draw (Q1) ++(-1.5\queueSlot-\refSlot,0.75*\queueSizeY) -- +(0,-1.5*\queueSizeY); 
\draw [cust] (Q1) ++(-2\queueSlot-\refSlot,0) node {\ldots}; 
\draw (Q1) ++(-2.5\queueSlot-\refSlot,0.75*\queueSizeY) -- +(0,-1.5*\queueSizeY); 
\draw [cust] (Q1) ++(-3\queueSlot-\refSlot,0) circle (1\custSize)
		     ++(0,\custText) node {n-1}; 
\draw (Q1) ++(-3.5\queueSlot-\refSlot,0.75*\queueSizeY) -- +(0,-1.5*\queueSizeY); 
\coordinate (tCust) at ($(Q1)+(-4\queueSlot-\refSlot,0)$);
\draw [tCust] (Q1) ++(-4\queueSlot-\refSlot,0) circle (1\custSize) node {$n$}; 
\draw [font=\scriptsize] (tCust) +(0,-.50) node {tagged};
\draw [font=\scriptsize] (tCust) +(0,-.75) node {customer};
\draw (Q1) ++(-4.5\queueSlot-\refSlot,0.75*\queueSizeY) -- +(0,-1.5*\queueSizeY); 
\draw [cust] (Q1) ++(-5\queueSlot-\refSlot,0) node {\ldots}; 
\draw (Q1) ++(-5.5\queueSlot-\refSlot,0.75*\queueSizeY) -- +(0,-1.5*\queueSizeY); 
\draw [cust] (Q1) ++(-6\queueSlot-\refSlot,0) circle (1\custSize) node {\tiny$n+m$}; 
\draw (Q1) ++(-0.5\queueSlot-\refSlot-\mu_1mSlot\queueSlot,+\queueSizeY)
	   -- ++(\mu_1mSlot\queueSlot,0) -- ++(0,-2*\queueSizeY) -- ++(-\mu_1mSlot\queueSlot,0); 
\draw [->] (Q1) ++(0.5\queueSlot+\refSlot,0) -- +(1,0); 
\end{tikzpicture}
\caption{Tagged customer $(n,m)$ at position $n$ in the queue, with $m$ customers behind him.}
\label{fig:2}       
\end{figure*} 

For the Laplace transforms $\psi(s) = \E[e^{-s \, S^*}]$ and
$\psi(n,m,s) = \E[e^{-s \, S(n,m)}]$, the following relation holds by virtue of PASTA~\cite{wolff},
\begin{equation}\label{eq:Laplace.eq}
\psi(s) = \sum_{n=0}^\infty \psi(n+1,0,s) \, \pi_n \, .
\end{equation}
Hence, to compute the Laplace transform of the stationary sojourn time $S^*$, we need to compute the transforms $\psi(n,0,s)$ for each $n \geq 0$.

By using next-event analysis we have, for $n>0$,
\begin{equation}\label{eq:soj.rel}
S(n,m) =
\left\{\begin{array}{ll}
\frac{1}{\lambda+\mu_0} \, X + &
 \left\{\begin{array}{ll}
S(n-1,m) & \mbox{w.p. } \mu_0/(\mu_0+\lambda) \\
S(n,m+1) & \mbox{w.p. } \lambda/(\mu_0+\lambda)
\end{array}
\right.
\mbox{, as } n + m \leq K
\\ \\
\frac{1}{\lambda+\mu_1} \, X + &
 \left\{\begin{array}{ll}
S(n-1,m) & \mbox{w.p. } \mu_1/(\mu_1+\lambda) \\
S(n,m+1) & \mbox{w.p. } \lambda/(\mu_1+\lambda)
\end{array}
\right.
\mbox{, as } n + m > K
\end{array}
\right.
\end{equation}
where $X$ denotes an independent exponential random variable with rate $1$, and
$S(0,m) = 0$.
By Laplace transforming the relations \eqref{eq:soj.rel}, we get, for $n>0$,
\begin{equation}\label{eq:lap.rel}
\psi(n,m,s) =
 \frac{ \mu_{1\{n + m > K\}} \, \psi(n-1,m,s) + \lambda \,  \psi(n,m+1,s) }{\lambda+\mu_{1\{n + m > K\}}+s}
\end{equation}
with boundary conditions, $\psi(0,m,s) = 1$, for all $m \geq 0$, and where we used the indicator function $1\{A\} = 1$ if $A$ is true and $0$ otherwise. 

When $m \geq K$, it follows that for any $n >0$, 
the server will work at high speed during the whole sojourn time of the $(n,m)$-tagged customer. Hence $S(n,m)$ is Erlang distributed with parameters $n$ and $\mu_1$, and thus its Laplace transform is equal to
\begin{equation}\label{eq:only.H}
\psi(n,m,s) = (\mu_1/(\mu_1 + s))^n \quad \mbox{as } n > 0 \mbox{ and } m \geq K.
\end{equation}
The above equation is also valid for $n=0$.

Using expression \eqref{eq:only.H} in \eqref{eq:lap.rel}, the Laplace transforms $\psi(n,m,s)$ for $m < K$ can be recursively computed in $n$,
as the following lemma shows. The proof of the lemma is deferred to Appendix \ref{sec:appendix}.
\begin{lemma}\label{lm:rec.psi}
By defining
\begin{align*}
a_s(k) = & \mu_0/(\lambda+\mu_0+s) 1\{k \leq K\} +  \mu_1/(\lambda+\mu_1+s) 1\{k > K\} \, ; \\
b_s(k) = & \lambda/(\lambda+\mu_0+s) 1\{k \leq K\} + \lambda/(\lambda+\mu_1+s) 1\{k > K\} \, ,
\end{align*}
and $B_s(k,0)=1$ and $B_s(k,h+1) = B_s(k,h) \, b_s(k+h)$ for $k, h \ge 0$, we have 
\begin{align}\label{eq:rec.psi}
\psi(n,m,s)
= & B_s(n+m,K-m) \, \left(\frac{\mu_1}{\mu_1 + s}\right)^n  \nonumber \\
  &  + \sum_{k=m}^{K-1} a_s(n+k) \, B_s(n+m,k-m) \, \psi(n-1,k,s) \, ,
\end{align}
for $n >0$ and $0 \le m < K$. 
\end{lemma}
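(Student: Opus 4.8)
The plan is to prove \eqref{eq:rec.psi}, for each fixed $n>0$, by backward induction on $m$, starting from the boundary value \eqref{eq:only.H} and repeatedly applying the one-step recursion \eqref{eq:lap.rel}.

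First I would rewrite \eqref{eq:lap.rel} in the compact form
\[
\psi(n,m,s) = a_s(n+m)\,\psi(n-1,m,s) + b_s(n+m)\,\psi(n,m+1,s),
\]
valid for all $n>0$ and $m\ge 0$; this is just the observation that $\mu_{1\{n+m>K\}}/(\lambda+\mu_{1\{n+m>K\}}+s)$ and $\lambda/(\lambda+\mu_{1\{n+m>K\}}+s)$ are exactly $a_s(n+m)$ and $b_s(n+m)$. I would also adopt the conventions that the empty product $B_s(k,0)=1$ (already in the statement) and the empty sum equals $0$, so that the right-hand side of \eqref{eq:rec.psi} also makes sense for $m=K$, where it collapses to $B_s(n+K,0)\,(\mu_1/(\mu_1+s))^n=(\mu_1/(\mu_1+s))^n$; by \eqref{eq:only.H} this equals $\psi(n,K,s)$, which serves as the base case.

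For the inductive step, fix $0\le m\le K-1$ and assume \eqref{eq:rec.psi} holds with $m$ replaced by $m+1$. Substituting that expression for $\psi(n,m+1,s)$ into the compact recursion, and writing $a_s(n+m)\,\psi(n-1,m,s)=a_s(n+m)\,B_s(n+m,0)\,\psi(n-1,m,s)$ to supply the missing $k=m$ term, the claim follows once I establish the shift identity
\[
b_s(n+m)\,B_s(n+m+1,h)=B_s(n+m,h+1),\qquad h\ge 0,
\]
which is immediate from the defining recursion of $B_s$ (equivalently from $B_s(k,h)=\prod_{j=0}^{h-1}b_s(k+j)$). Applying it with $h=K-m-1$ rewrites the Erlang term, and applying it with $h=k-m-1$ for $m+1\le k\le K-1$ rewrites the summands; collecting everything then yields precisely the right-hand side of \eqref{eq:rec.psi} for $m$.

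The computation is essentially bookkeeping, and the point that will need the most care is getting this index shift right --- ensuring that it is the factor $b_s(n+m)$ (and not some later factor) that extends $B_s(n+m+1,\cdot)$ to $B_s(n+m,\cdot)$ on the left --- together with verifying that \eqref{eq:only.H} is genuinely recovered as the $m=K$ instance under the empty-product/empty-sum conventions, so that the induction is properly anchored. Once these are in place no further case distinction between the low and the high service rate is needed, since that distinction has already been absorbed into $a_s$ and $b_s$.
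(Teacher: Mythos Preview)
Your proposal is correct and follows essentially the same route as the paper: rewrite \eqref{eq:lap.rel} as $\psi(n,m,s)=a_s(n+m)\,\psi(n-1,m,s)+b_s(n+m)\,\psi(n,m+1,s)$, do backward induction on $m$, and use the shift identity $b_s(k)\,B_s(k+1,h)=B_s(k,h+1)$. The only cosmetic difference is that the paper anchors the induction at $m=K-1$ (verifying that case directly from the recursion and \eqref{eq:only.H}), whereas you anchor it one step earlier at $m=K$ via the empty-sum convention; either choice works.
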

\begin{remark}
It can be easily shown that the value of $B_s(k,h)$ can be explicitly computed by the following formula
\begin{equation}\label{eq:B_s}
B_s(k,h)= \left(\frac{\lambda}{s+\lambda +\mu_1}\right)^h
\left(\frac{s+\lambda +\mu_1}{s+\lambda +\mu_0}\right)^{h \wedge (K-k+1)^+}
\end{equation}
with $a \wedge b = \min\{a,b\}$ and $(a)^+ = \max\{a,0\}$.
\end{remark}

Relation \eqref{eq:only.H} and Lemma \ref{lm:rec.psi} allow us to compute $\psi(n,m,s)$ for any $m,n\geq0$.
However, to calculate $\psi(s)$ in \eqref{eq:Laplace.eq} we still need to compute an infinite number of terms.
To overcome this issue we take advantage of the fact that, above the threshold $K$, the transition diagram is invariant towards the right, similarly to the standard $M/M/1$ queue.
To use this invariant property we introduce the following marginal $z$-transform
\begin{equation}\label{eq:phi.def}
  \phi(z,m,s) = \sum_{h=0}^\infty \psi(K+h+1,m,s) \, z^h \, ,
\end{equation}
valid for $|z|<1$.
In the following we show how to compute, in finitely many steps, the function $\phi(z,m,s)$. We use it to calculate the infinite sum in \eqref{eq:Laplace.eq} and then obtain a formula to compute the Laplace transform of the sojourn time as given in Proposition \ref{prop:Lapl.trans}.

By writing \eqref{eq:lap.rel} for $n=K+h+1$, multiplying by $z^h$ and summing over all $h \geq 0$, the following recursive equation holds
\begin{equation}\label{eq:phi.rel}
\phi(z,m,s)  =
 \frac{\mu_1 \, \psi(K,m,s)}{\lambda+\mu_1(1-z)+s}
+ \frac{\lambda \, \phi(z,m+1,s)}{\lambda+\mu_1(1-z)+s}  \ .
\end{equation}
As boundary value we have
\begin{align}\label{eq:phi(z,K,s)}
\phi(z,K,s)
= & \sum_{h=0}^\infty \left(\frac{\mu_1}{\mu_1 + s}\right)^{K+h+1} z^h
= \left(\frac{\mu_1}{\mu_1 + s}\right)^{K+1} \,
\sum_{h=0}^\infty \left(\frac{\mu_1 \, z}{\mu_1 + s}\right)^h \nonumber \\
= & \left(\frac{\mu_1}{\mu_1 + s}\right)^{K+1} \, \frac{\mu_1 + s}{\mu_1 (1-z)+ s} \ ,
\end{align}
from which the values of $\phi(z,m,s)$ can be recursively computed for $m=K-1,\ldots,0$, yielding
\begin{align}\label{eq:phi(z,m,s)}
\phi(z,m,s)
= & \sum_{h=0}^{K-1-m}\frac{\mu_1\,\lambda^h}{(\lambda+\mu_1(1-z)+s)^{h+1}} \psi(K,m+h,s) \nonumber \\
 & +  \left(\frac{\lambda}{\lambda+\mu_1(1-z)+s}\right)^{K-m} \phi(z,K,s) \, .
\end{align}
In particular we can compute, in finitely many steps, the value of $\phi(z,0,s)$.

Knowing the value of $\phi(z,0,s)$, expression \eqref{eq:Laplace.eq} can be finally computed as summarized in the following proposition.

\begin{proposition}\label{prop:Lapl.trans}
The Laplace transform of $S^*$ can be computed in the form
\begin{align}\label{eq:Laplace.eq.final}
\psi(s)
= & \pi_0 \, \sum_{h=0}^{K-1} \left[ \left(\frac{\lambda}{\mu_0}\right)^h \,\psi(h+1,0,s)
+  \left(\frac{\lambda}{\mu_0}\right)^K
\left(\frac{\lambda}{\mu_1}\right)^h \left(\frac{\mu_1}{\mu_1+s}\right)^{h+1} \psi(K,h,s) \right]\nonumber \\
 & + \pi_0  \left(\frac{\lambda }{\mu_0}\right)^K \left(\frac{\lambda }{\mu_1}\right)^K 
\frac{\mu_1}{\mu_1-\lambda} \left( \frac{\mu_1}{\mu_1+s}\right)^{2K}    \frac{\mu_1-\lambda}{\mu_1-\lambda +s}
\ .
\end{align}
\end{proposition}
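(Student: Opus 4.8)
The plan is to evaluate the infinite sum \eqref{eq:Laplace.eq} by cutting it at the threshold $K$ and recognising the resulting tail as the $z$-transform $\phi(\cdot,0,s)$ of \eqref{eq:phi.def} evaluated at the single point $z=\lambda/\mu_1$. First I would split $\psi(s)=\sum_{n=0}^\infty\psi(n+1,0,s)\,\pi_n$ into the finite part $\sum_{n=0}^{K-1}$ and the tail $\sum_{n=K}^\infty$. For the finite part, substituting $\pi_n=(\lambda/\mu_0)^n\pi_0$ from \eqref{eq:imm.stat.dist} immediately produces the first summand $(\lambda/\mu_0)^h\psi(h+1,0,s)$ in \eqref{eq:Laplace.eq.final}, where the transforms $\psi(h+1,0,s)$ with $0\le h\le K-1$ are exactly those delivered by Lemma \ref{lm:rec.psi} (taking $m=0$).

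Next I would treat the tail. Using that $(\mu_1/\mu_0)^K(\lambda/\mu_1)^K=(\lambda/\mu_0)^K$, the stationary probabilities above the threshold collapse to $\pi_{K+h}=(\lambda/\mu_0)^K(\lambda/\mu_1)^h\,\pi_0$ for all $h\ge0$, whence
\[
\sum_{n=K}^\infty\psi(n+1,0,s)\,\pi_n
=\pi_0\,(\lambda/\mu_0)^K\sum_{h=0}^\infty\psi(K+h+1,0,s)\,(\lambda/\mu_1)^h
=\pi_0\,(\lambda/\mu_0)^K\,\phi(\lambda/\mu_1,0,s),
\]
which is legitimate since stability ($\mu_1>\lambda$) gives $|\lambda/\mu_1|<1$, the region of validity of \eqref{eq:phi.def}, and the terms of the series are bounded by $1$ so that it converges absolutely.

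It then remains to insert the closed form of $\phi(\lambda/\mu_1,0,s)$. Plugging $z=\lambda/\mu_1$ into \eqref{eq:phi(z,m,s)} with $m=0$ and into the boundary value \eqref{eq:phi(z,K,s)}, one uses the simplification $\lambda+\mu_1(1-z)+s=\mu_1+s$, which holds precisely at $z=\lambda/\mu_1$: every denominator becomes $\mu_1+s$, so that $\phi(\lambda/\mu_1,K,s)=\big(\mu_1/(\mu_1+s)\big)^{K}\,\mu_1/(\mu_1-\lambda+s)$ and
\[
\phi(\lambda/\mu_1,0,s)=\sum_{h=0}^{K-1}\Big(\tfrac{\lambda}{\mu_1}\Big)^h\Big(\tfrac{\mu_1}{\mu_1+s}\Big)^{h+1}\psi(K,h,s)
+\Big(\tfrac{\lambda}{\mu_1}\Big)^K\Big(\tfrac{\mu_1}{\mu_1+s}\Big)^{2K}\tfrac{\mu_1}{\mu_1-\lambda+s}.
\]
Multiplying by $\pi_0(\lambda/\mu_0)^K$, adding the finite part, and rewriting $\mu_1/(\mu_1-\lambda+s)=\tfrac{\mu_1}{\mu_1-\lambda}\cdot\tfrac{\mu_1-\lambda}{\mu_1-\lambda+s}$ in the last term yields exactly \eqref{eq:Laplace.eq.final}; here the $\psi(K,h,s)$ with $0\le h\le K-1$ are once more those of Lemma \ref{lm:rec.psi}, now with $n=K$.

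The only genuinely non-routine point is recognising that the tail of \eqref{eq:Laplace.eq} equals $\phi$ evaluated at the geometric ratio $\lambda/\mu_1$ of the queue length above the threshold, together with the observation that this particular argument makes the recursion \eqref{eq:phi.rel} and its solution \eqref{eq:phi(z,m,s)} degenerate so cleanly; everything after that is elementary algebra.
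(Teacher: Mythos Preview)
Your proposal is correct and follows essentially the same approach as the paper: split \eqref{eq:Laplace.eq} at $n=K$, identify the tail with $\pi_0(\lambda/\mu_0)^K\phi(\lambda/\mu_1,0,s)$, and evaluate the latter from \eqref{eq:phi(z,m,s)}--\eqref{eq:phi(z,K,s)} using the simplification $\lambda+\mu_1(1-z)+s=\mu_1+s$ at $z=\lambda/\mu_1$. Your write-up adds a few justifications (convergence, the algebraic identity behind the simplification) that the paper leaves implicit, but the argument is the same.
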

\begin{proof}
The result follows from \eqref{eq:Laplace.eq} by splitting the sum in a finite, $n < K$, and infinite part,
\begin{align}\label{eq:Laplace.eq.alt}
\psi(s)
= & \pi_0 \, \sum_{n=0}^{K-1}  \left(\frac{\lambda}{\mu_0}\right)^n \,\psi(n+1,0,s)
+ \pi_0  \, \left(\frac{\lambda}{\mu_0}\right)^K \, \phi(\lambda/\mu_1, 0, s) \ .
\end{align}
For the last term we use \eqref{eq:phi(z,m,s)} and \eqref{eq:phi(z,K,s)} to get
\begin{align}\label{eq:phi.at.rho_1}
\phi(\lambda/\mu_1,m,s)
= & \sum_{h=0}^{K-1-m}
  \left(\frac{\lambda}{\mu_1}\right)^h \left(\frac{\mu_1}{\mu_1+s}\right)^{h+1} \psi(K,m+h,s) \nonumber \\
 & + \frac{\mu_1}{\mu_1-\lambda +s} 
\left(\frac{\lambda }{\mu_1}\right)^{K-m} 
\left( \frac{\mu_1}{\mu_1+s}\right)^{2K-m}
\end{align}
and the result follows by rearranging terms.
\end{proof}

The terms appearing in equation \eqref{eq:Laplace.eq.final} have the following nice probabilistic interpretation.
\begin{itemize}
\item With probability $\pi_h = \pi_0 (\lambda/\mu_0)^h$, $h<K$, the tagged user enters a system with $h$ customers 
and experiences a sojourn time, the Laplace transform of which is $\psi(h+1,0,s)$.
\item With probability $\pi_{K+h} = \pi_0 (\lambda/\mu_0)^K (\lambda/\mu_1)^h$, $0<h<K$, he finds $K+h$ customers waiting.
We slightly modify the system and assume that the tagged customer overtakes $h+1$ customers and occupies position $K$ instead of the last one in the queue. 
In addition, the server first serves the last $h+1$ customers.
Since the speed of the server depends on the number of customers waiting and not on their specific order of service,
the first $h+1$ services will be at speed $\mu_1$ taking an Erlang time with parameters $h+1$ and $\mu_1$ to complete.
What is left is the service time of the tagged customer, the Laplace transform of which is $\psi(K,h,s)$.
 \item With probability $\pi_{\geq 2K} = \pi_0 (\lambda/\mu_0)^K (\lambda/\mu_1)^K (\mu_1/(\mu_1-\lambda))$
the tagged customer finds at least $2K$ customers waiting.
As before, he is going to occupy position $K$, the sojourn time of which is Erlang distributed with parameters $K$ and $\mu_1$.
The number of customers he has overtaken is at least $K$, and the time it takes to complete their services is the sum of $K$ exponential
random variables with parameter $\mu_1$ plus a generic sojourn time of an $M/M/1$ queue having $\mu_1$ as service speed.
This last quantity is exponentially distributed with parameter $\mu_1-\lambda$.
\end{itemize}

\begin{remark}\label{rm:inv.lap.scalar}
From the Laplace transform of the stationary sojourn time given in \eqref{eq:Laplace.eq}
an explicit expression for the distribution can be obtained.
Indeed, the inverse transformation is straightforward as the Laplace transform is a rational polynomial, the poles of which are all located on the real axis.
To be more precise, the locations of the poles belong to the set
$$\mathcal A = \{ -(\lambda+\mu_1), \, -(\lambda+\mu_0), \, -\mu_1, \, -(\mu_1-\lambda)\} \, ,$$
implying that the density function is given by a linear combination of terms
$t^{k} e^{a \, t}$ for $a \in \mathcal A$ and $k=0,1,\ldots,\mbox{mult}(a)-1$,
where $\mbox{mult}(a)$ denotes the multiplicity of pole $a$.
\end{remark}

\begin{remark}
If we let $K\to\infty$ in \eqref{eq:Laplace.eq.final}, we recover  \eqref{eq:Laplace.eq}.
For any $n\geq0$, $\psi(n+1,0,s)$ becomes the Laplace transform of an Erlang distribution with parameters $n+1$ and $\mu_0$,
and $\psi(s)$ reduces to the Laplace transform of an exponential distribution with parameter $\mu_0-\lambda$,
that is the distribution of the sojourn time of a classical $M/M/1$ queue with service rate $\mu_0$.
\end{remark}
\begin{remark}
If $K=0$, only the last term in \eqref{eq:Laplace.eq.final} is different from zero.
Substituting $\pi_0=(\mu_1-\lambda)/\mu_1$, given in \eqref{eq:pi_0}, we get that 
$\psi(s)$ is the Laplace transform of an exponential distribution with parameter $\mu_1-\lambda$,
that is the distribution of the sojourn time of a classical $M/M/1$ queue with service rate $\mu_1$.
\end{remark}
\subsection{First moment calculation}
As mentioned in Remark \ref{rm:inv.lap.scalar}, it is possible to compute the distribution of the sojourn time, but it is easier to compute the moments by using the relation
$\E[S^k] = (-1)^k \psi^{(k)}(0+)$. In this section we show how to compute the first moment. However, by taking higher order derivatives of the Laplace transform, recursive expressions can be obtained to compute all moments.

Let $\nu = \E[S]$ and $\nu_{n,m}=\E[S(n,m)]$. 
With $n>0$, from \eqref{eq:only.H} we have for $m \geq K$,
$\nu_{n,m} = n/ \mu_1$,  and  using \eqref{eq:rec.psi},
for $0 \le m < K$,
\begin{align}\label{eq:rec.psi.nu}
\nu_{n,m}
= &  \frac{n}{\mu_1}  B_{0^+}(n+m,K-m) - B'_{0^+}(n+m,K-m)  \nonumber \\ & 
+ \sum_{k=m}^{K-1}  \Big( \nu_{n-1,k} \, a_{0^+}(n+k) \, B_{0^+}(n+m,k-m) 
\nonumber \\ & 
\quad \quad \quad \quad - a'_{0^+}(n+k) \, B_{0^+}(n+m,k-m) \nonumber \\ & 
\quad \quad  \quad \quad  - a_{0^+}(n+k) \, B'_{0^+}(n+m,k-m) \Big)  \ ,
\end{align}
where
\begin{align*}
a'_{0^+}(k) = & -\mu_0/(\lambda+\mu_0)^2 1\{k \leq K\} -  \mu_1/(\lambda+\mu_1)^2 1\{k > K\} \, ; \\
b'_{0^+}(k) = & -\lambda/(\lambda+\mu_0)^2 1\{k \leq K\} - \lambda/(\lambda+\mu_1)^2 1\{k > K\} \, ,
\end{align*}
and $B'_{0^+}(k,0)=0$ and $B'_{0^+}(k,h+1) = B'_{0^+}(k,h) \, b_0(k+h) + B_0(k,h) \, b'_{0^+}(k+h) $. 

The following algorithm shows how to recursively compute  
$\nu_{n,m}$ for  $0 \le m < K$:\newline
\begin{algorithm}[H]
\For{i=$1$ \emph{\KwTo} n}{
\For{j=$1$ \emph{\KwTo} K-m}{
compute $\nu_{i,K-j}$
 }
 }
 \caption{Computing $\nu_{n,m}$ for $n>0$ and $0 \le m < K$}
\end{algorithm}

Finally, by applying Proposition \ref{prop:Lapl.trans}, we get
\begin{align}\label{eq:Laplace.eq.nu}
\nu
= & \pi_0 \, \sum_{h=0}^{K-1} \left[ \left(\frac{\lambda}{\mu_0}\right)^h \,\nu_{h+1,0}
+  \frac{\lambda^K}{\mu_0^K} \left(\frac{\lambda}{\mu_1}\right)^h \left( \nu_{K,h} + \frac{h+1}{\mu_1} \right) 
 \right] \nonumber \\
 & + \pi_0 \left(\frac{\lambda }{\mu_0}\right)^K \left(\frac{\lambda }{\mu_1}\right)^K \left(\frac{2K}{(\mu_1-\lambda )}+\frac{\mu_1}{(\lambda -\mu_1)^2}\right)
\ .
\end{align}
\section{Model with inspection times}\label{sc:inspection-times}
In this section we analyze the system where inspection times occur according to a Poisson stream with rate $\gamma < \infty$. So, in this case, there is no continuous inspection and adaptation of the service rate is delayed (with an exponential time) when the number of customers in the system crosses the threshold $K$. If at an inspection time
the system is found congested with more than $K$ customers,
the service rate is immediately set to the fast rate $\mu_1$
and otherwise, if at most $K$ customers are present,
the service rate is set to the low rate $\mu_0$.

Now we need to include the service rate in the state description of the system, resulting in the Markov chain shown in Figure \ref{fig:trans.diag.inspections}.
Note that for any number of customers in the system, the service rate can be high and low.
%

\begin{figure*}[h]\centering
\newif\ifrepONE
\repONEtrue 
\newcommand{\nodet}[2]{%
  \ifrepONE #1 \else
   \dfrac{#1}%
    {\hbox{\hspace*{0.25cm}#2\hspace*{0.25cm}}}
\fi}
\newcommand{\labelt}[1]{\ifrepONE #1 \fi}
\begin{tikzpicture}[->,shorten >=1pt,node distance=.75cm,
state/.style={circle,draw,minimum size=0.8cm}]
\tiny 
\ifrepONE
  \tikzstyle{label}=[minimum size=0.8cm] 
  \node[label] (L) [] {\labelt{Low rate:}};
  \node[label] (H) [below=of L] {\labelt{High rate:}};
  \node[state] (0L) [right=of L] {$\nodet{0}{L}$};
  \node[state] (0H) [right=of H] {$\nodet{0}{H}$};
\else
  \node[state] (0L) [] {$\nodet{0}{L}$};
  \node[state] (0H) [below=of 0L] {$\nodet{0}{H}$};
\fi
\node[state] (1L) [right=of 0L] {$\nodet{1}{L}$};
\node[state,draw=none] (2L) [right=of 1L] {$\ldots$};
\node[state] (KL) [right=of 2L] {$\nodet{K}{L}$};
\node[state] (K+1L) [right=of KL] {$\nodet{K+1}{L}$};
\node[state] (K+2L) [right=of K+1L] {$\nodet{K+2}{L}$};
\node[state,draw=none] (K+3L) [right=of K+2L] {$\ldots$};
\node[state] (1H) [right=of 0H] {$\nodet{1}{H}$};
\node[state,draw=none] (2H) [right=of 1H] {$\ldots$};
\node[state] (KH) [right=of 2H] {$\nodet{K}{H}$};
\node[state] (K+1H) [right=of KH] {$\nodet{K+1}{H}$};
\node[state] (K+2H) [right=of K+1H] {$\nodet{K+2}{H}$};
\node[state,draw=none] (K+3H) [right=of K+2H] {$\ldots$};
\path [every node/.style={font=\tiny}]
	(0L)  edge [bend left] node [above] {$\lambda$} (1L);
\path [every node/.style={font=\tiny}]
	(1L)	 edge [bend left] node [above] {$\lambda$} (2L)
	     edge [bend left] node [below] {$\mu_0$} (0L)
	(2L)	 edge [bend left] node [above] {$\lambda$} (KL)
	     edge [bend left] node [below] {$\mu_0$} (1L)
	(KL)    edge [bend left] node [above] {$\lambda$}(K+1L)
	       edge [bend left] node [below] {$\mu_0$} (2L)
	(K+1L)  edge [bend left] node [above] {$\lambda$}(K+2L)
	     edge [bend left] node [below] {$\mu_0$} (KL)
	(K+2L)  edge [bend left] node [above] {$\lambda$}(K+3L)
	       edge [bend left] node [below] {$\mu_0$} (K+1L)
	(K+3L)  edge [bend left] node [below] {$\mu_0$} (K+2L);
\path [every node/.style={font=\tiny}]
	(0H)  edge [bend left] node [above] {$\lambda$} (1H);
\path [every node/.style={font=\tiny}]
	(1H)	 edge [bend left] node [above] {$\lambda$} (2H)
	     edge [bend left] node [below] {$\mu_1$} (0H)
	(2H)	 edge [bend left] node [above] {$\lambda$} (KH)
	     edge [bend left] node [below] {$\mu_1$} (1H)
	(KH)    edge [bend left] node [above] {$\lambda$}(K+1H)
	       edge [bend left] node [below] {$\mu_1$} (2H)
	(K+1H)  edge [bend left] node [above] {$\lambda$}(K+2H)
	     edge [bend left] node [below] {$\mu_1$} (KH)
	(K+2H)  edge [bend left] node [above] {$\lambda$}(K+3H)
	       edge [bend left] node [below] {$\mu_1$} (K+1H)
	(K+3H)  edge [bend left] node [below] {$\mu_1$} (K+2H);
\path [every node/.style={font=\tiny}]
	(K+1L)  edge [bend left] node [right] {$\gamma$} (K+1H)
	(K+2L)  edge [bend left] node [right] {$\gamma$} (K+2H);
\path [every node/.style={font=\tiny}]
	(0H)  edge [bend left] node [right] {$\gamma$} (0L)
	(1H)  edge [bend left] node [right] {$\gamma$} (1L)
	(KH)  edge [bend left] node [right] {$\gamma$} (KL);
\end{tikzpicture}
\caption{Transition diagram for exponential inspection times}
\label{fig:trans.diag.inspections}       
\end{figure*}

Denoting by $\mathcal{M}$ the stationary random service rate, let $\pi_{0n} = \Pr(\mathcal{M}=\mu_0,Q^*=n)$ and $\pi_{1n} = \Pr(\mathcal{M}=\mu_1,Q^*=n)$ be the stationary probabilities to find $n$ customers in the system with the server working at rate $\mu_0$ and $\mu_1$, respectively.
In what follows, the quantity $\pi_n$ denotes the column vector with components $(\pi_{0n} ,\pi_{1n})^\tr$, where $(\cdot)^\tr$ is the transposition operator.
The stationary distribution satisfies the balance equations
\begin{equation}\label{eq:insp.stat.dist.rel}
\def\ns{\kern-.75em}
\begin{array}{llllcr}
                           \ns &- \; H_1 \, \pi_0 \ns &+\;  \matM \, \pi_1        \ns &= 0 &\\
\matL \, \pi_{n-1} \ns &- \; H_2 \, \pi_n \ns &+\;  \matM \, \pi_{n+1} \ns &= 0 & \quad 1 \leq n \leq K \, ;\\
\matL \, \pi_{n-1} \ns &- \; H_3 \, \pi_n \ns &+\;  \matM \, \pi_{n+1} \ns &= 0 & \quad n > K \, ,
\end{array}
\end{equation}
where the transition matrices are defined by
\begin{equation*}
\matM =  \left(\begin{array}{cc} \mu_0 & 0 \\ 0 & \mu_1 \end{array}\right)\ , \quad
\matL =   \left(\begin{array}{cc} \lambda & 0 \\ 0 & \lambda \end{array}\right) \ ,
\end{equation*}
and $H_1 = \matL + \matG_2$,
$H_2 = \matM + \matL + \matG_2$
and $H_3 = \matM + \matL + \matG_3$, where
\begin{equation*}
\matG_2 =  \left(\begin{array}{cc} 0 & -\gamma \\ 0 & \gamma \end{array}\right) \ , \quad
\matG_3 =  \left(\begin{array}{cc} \gamma & 0 \\ -\gamma & 0 \end{array}\right)
\end{equation*}

From the theory on quasi-birth-death processes \cite{Neuts94,latouche}, we conclude that for $n>K$, the
stationary probability vector $\pi_n$ can be written in the form
\begin{equation}\label{eq:pi.K+h}
\pi_{K+h} = R^h \, \pi_K, \quad h \geq 0 \ ,
\end{equation}
where the matrix $R$ is the minimal non-negative solution of the matrix equation
\begin{equation}\label{eq:R.mat.eq}
\matL - H_3 \,R + \matM \,R^2 = 0 \ .
\end{equation}
Using the probabilistic interpretation of $R$, or by solving the matrix equation \eqref{eq:R.mat.eq}, it follows that
$R$ is of triangular form and in particular, it is equal to
\begin{equation}\label{eq:matrix.R}
R = \left( \begin{array}{cc}
R_{00} & 0 \\
\frac{\gamma}{\mu_1} \frac{R_{00}}{1-R_{00}} & \frac{\lambda}{\mu_1}
\end{array} \right) \ ,
\end{equation}
with $R_{00} = \frac{\mu_0 + \gamma +\lambda }{2 \mu_0}-\sqrt{\left(\frac{\mu_0+\gamma +\lambda }{2\mu_0}\right)^2-\frac{ \lambda  }{\mu_0}}$.

The value of $\pi_K$ can be computed by the normalizing equation
\begin{equation}\label{eq:norm.eq}
\sum_{k=0}^{K-1} e \, \pi_k  + e \, (I -R)^{-1} \pi_K = 1 \ ,
\end{equation}
with $e$ the all-one (row) vector.

By PASTA, as in \eqref{eq:Laplace.eq},
the Laplace transform of the stationary sojourn time
is given by
\begin{equation}\label{eq:Laplace.eq.inf.insp}
\psi(s) = \sum_{n=0}^\infty \psi(n+1,0,s) \, \pi_n \, ,
\end{equation}
where $\psi(n,m,s)$
denotes the row vector $(\psi_0(n,m,s),\psi_1(n,m,s))$, with
$\psi_i (n,m,s)$ being the Laplace transform of the sojourn time
$S_i (n,m)$ of a tagged customer who is at position $(n,m)$ and the service rate is $\mu_i$, $i = 0, 1$.

By using next-event analysis, we get the following recursive equations
for the sojourn times, $S_i(n,m)$, $i=0,1$, $n>0$
\begin{align}\label{eq:soj.rel.insp}
S_i(n,m) &=
\frac{X}{\lambda+\mu_i+\gamma} +
 \left\{\begin{array}{ll}
S_i(n-1,m) & \mbox{w.p. } \mu_i/(\lambda+\mu_i+\gamma)  \\
S_i(n,m+1) & \mbox{w.p. } \lambda/(\lambda+\mu_i+\gamma)  \\
S_{1\{n + m > K\}}(n,m)  & \mbox{w.p. } \gamma/(\lambda+\mu_i+\gamma)
\end{array}
\right.
\end{align}
where $X$ denotes an independent exponential random variable with rate $1$,
and $S_i (0,m) = 0$.
Taking the Laplace transform of \eqref{eq:soj.rel.insp} yields the equation
\begin{align}\label{eq:lap.rel.insp}
\psi(n,m,s) \, (H(s)-\matG_{1\{n+m>K\}})
= \psi(n-1,m,s) \, \matM + \psi(n,m+1,s) \,  \matL
\end{align}
for $n>0$,  where
\begin{equation*}
H(s) = (\gamma+s) \,  \matI + \matL + \matM \ , \quad
\matGl =  \left(\begin{array}{cc} \gamma & \gamma \\ 0 & 0 \end{array}\right) \ , \quad
\matGr =  \left(\begin{array}{cc} 0 & 0 \\ \gamma & \gamma \end{array}\right) \ ,
\end{equation*}
and $\psi (0,m,s) = e$, with $e$ the all-one (row) vector and $\matI$ the identity matrix.

Similarly to \eqref{eq:only.H},
when $m \geq K$ and for any $n >0$, we have that whenever an inspection occurs,
the service rate is set and kept at the value $\mu_1$ till the end of the sojourn time of the tagged customer.
This implies that $\psi(n,m+1,s)=\psi(n,m,s)$ for $m \ge K$ and $n>0$, and substitution in \eqref{eq:lap.rel.insp} yields
\begin{equation}\label{eq:only.H.insp}
\psi(n,m,s) = e \, \matT^n(s)
  \quad \mbox{as } n > 0 \mbox{ and } m \geq K \ ,
\end{equation}
with $\matT(s)= \matM \, (H(s)-\matGr-\matL)^{-1} $.

Equations \eqref{eq:only.H.insp} and \eqref{eq:lap.rel.insp}
allow us to get the values of $\psi(n,m,s)$ for any $n,m\geq0$.
However to compute expression \eqref{eq:Laplace.eq.inf.insp} in finitely many steps,
we still need to find a way to handle the infinite sum.
So far, the analysis proceeds as in Section \ref{sc:immediate-switching}, and thus the next step would be to
introduce the marginal $z$-transforms corresponding to \eqref{eq:phi.def},
that is, $\phi_i(z,m,s) = \sum_h \psi_i(K+h+1,m,s) \, z^h$.
However, this approach immediately fails, since the stationary probability distribution \eqref{eq:pi.K+h} calls for a matrix generalization.
The main contribution of this work is to provide this generalization
by the introduction of the following \emph{matrix generating function},
\begin{equation}\label{eq:phi.def.insp}
  \phi(\matZ,m,s) = \sum_{h=0}^\infty \psi(K+h+1,m,s) \, \matZ^h \ ,
\end{equation}
where $\matZ$ is any matrix with eigenvalues contained in the open unit disk of the complex plane.

\begin{remark}
Since the absolute value of the Laplace transform $\psi_i(n,m,s)$ is less or equal to one, the assumption on the eigenvalues of $\matZ$ implies that the matrix generating function $\phi(\matZ,m,s)$ is well defined.
\end{remark}

Let us rewrite expression \eqref{eq:lap.rel.insp} for $n > K$ in the alternative form,
\begin{align}\label{eq:lap.rel.insp.alt}
\psi(n,m,s)
= \psi(n-1,m,s) \, \matT_M(s) + \psi(n,m+1,s) \, \matT_{\matL}(s)
\end{align}
with $\matT_A(s) = \matA \, (H(s)-\matGr)^{-1} $, $\matA\in\{\matL,\matM\}$.
Multiplying expression \eqref{eq:lap.rel.insp.alt} on the right by $\matZ^h$, for $n = K+h+1$, and then
summing over $h \ge 0$ and using that
$\matT \, \matZ^h \,   \matT^{-1} = (\matT\, \matZ \, \matT^{-1}  )^h \ ,$
we get a recursive equation for $\phi(\matZ,m,s)$,
\begin{align}\label{eq:phi.rel.insp}
\phi(\matZ,m,s)= &
  \psi(K,m,s) \, \matT_M(s) +  \phi(\matT_M(s) \, \matZ \, \matT_M^{-1}(s),m,s) \, \matT_M(s) \, \matZ \nonumber \\
& +  \phi(\matT_{\matL}(s) \,\matZ \, \matT_{\matL}^{-1}(s),m+1,s) \, \matT_{\matL}(s) \ .
\end{align}
The main difference between equations \eqref{eq:phi.rel} and \eqref{eq:phi.rel.insp} is that in the latter we loose the commutative property of the product and the functions $\phi$ need to be evaluated for different values of their arguments.
The boundary condition is obtained from \eqref{eq:only.H.insp},
\begin{align}\label{eq:phi(z,K,s).insp}
\phi(Z,K,s)
= & e \, \matT^{K+1}(s)  \, \left(\sum_{h=0}^\infty  \matT^h(s) \, \matZ^h \, \right) 
= e \, \matT^{K+1}(s) \, \matS(\matZ,I,\matT(s)) \, ,
\end{align}
with $I$ being the identity matrix and where we employed the definition,
\begin{equation}\label{def:s}
\matS(\matZ,\matA,\matB) \defeq \sum_{h=0}^\infty \matB^h \, \matA \, \matZ^h \ .
\end{equation}
The matrix $\matS(\matZ,\matA,\matB)$ is well defined for any matrix $\matZ, \matA, \matB$ with
$\matZ$ and $\matB$ having all eigenvalues inside the closed and open disk, respectively (so that the series converges). Note that $\matT(s)$ in \eqref{eq:phi(z,K,s).insp} has all eigenvalues inside the open unit disk. The matrix $\matS(\matZ,\matA,\matB)$
can be computed as the solution of a matrix equation as shown in the following lemma. The proof is deferred to the appendix.
\begin{lemma}\label{lm:S.function}
Let $\matZ$, $\matA$ and $\matB$ be three matrices with $\matZ$ and $\matB$ having all eigenvalues in the closed and open disk, respectively, then the matrix function $\matS=\matS(\matZ,\matA,\matB)$ is the unique solution of the following matrix system,
\begin{equation}\label{eq:S.syst}
\matS - \matB \, \matS \,  \matZ = \matA \ .
\end{equation}
\end{lemma}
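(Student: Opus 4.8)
The plan is to prove two things: that $\matS(\matZ,\matA,\matB)$ as defined by the series \eqref{def:s} satisfies the matrix equation \eqref{eq:S.syst}, and that this equation has a unique solution under the stated spectral hypotheses. For the first part, I would simply substitute the series into the left-hand side of \eqref{eq:S.syst}: compute $\matB\,\matS\,\matZ = \sum_{h=0}^\infty \matB^{h+1}\,\matA\,\matZ^{h+1} = \sum_{h=1}^\infty \matB^h\,\matA\,\matZ^h$, so that $\matS - \matB\,\matS\,\matZ$ telescopes to the $h=0$ term $\matB^0\,\matA\,\matZ^0 = \matA$. The only thing to check carefully is that the series converges absolutely, which follows from the assumption that $\matB$ has spectral radius strictly less than $1$ and $\matZ$ has spectral radius at most $1$: choosing submultiplicative norms adapted to $\matB$ and $\matZ$, one gets $\|\matB^h\,\matA\,\matZ^h\| \le C\,\rho^h$ for some $\rho<1$, so the series and the rearrangement above are both justified.

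For uniqueness, I would use the vectorization (Kronecker) trick. Writing $\mathrm{vec}(\cdot)$ for the column-stacking operator and using the identity $\mathrm{vec}(\matB\,\matS\,\matZ) = (\matZ^\tr \otimes \matB)\,\mathrm{vec}(\matS)$, equation \eqref{eq:S.syst} becomes the linear system $(I - \matZ^\tr \otimes \matB)\,\mathrm{vec}(\matS) = \mathrm{vec}(\matA)$. The eigenvalues of $\matZ^\tr \otimes \matB$ are the products $\zeta\beta$ where $\zeta$ ranges over the eigenvalues of $\matZ$ and $\beta$ over those of $\matB$; since $|\zeta| \le 1$ and $|\beta| < 1$, every such product has modulus strictly less than $1$, so $1$ is not an eigenvalue of $\matZ^\tr \otimes \matB$ and the coefficient matrix $I - \matZ^\tr \otimes \matB$ is invertible. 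Hence the solution is unique, and since the series \eqref{def:s} provides one solution, it must be the one.

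I do not anticipate a genuine obstacle here — the argument is standard Sylvester/Stein-equation theory. The only mild subtlety is bookkeeping with the norms in the convergence step (one wants to pick, for a given $\varepsilon>0$, a norm in which $\|\matB\| \le \rho(\matB)+\varepsilon < 1$ and a possibly different norm in which $\|\matZ\| \le 1+\varepsilon$, then combine them on the tensor product); but this is routine and can be handled in a sentence. An alternative to the Kronecker argument, if one prefers to stay intrinsic, is to observe that the difference $\matD$ of two solutions satisfies $\matD = \matB\,\matD\,\matZ = \matB^h\,\matD\,\matZ^h$ for all $h$, and letting $h\to\infty$ forces $\matD = 0$ by the same norm estimate; I would likely present this version since it reuses the convergence bound already established and needs no new machinery.
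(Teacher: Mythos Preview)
Your proposal is correct and, in the version you say you would actually present, coincides with the paper's proof: the paper verifies \eqref{eq:S.syst} by the same telescoping substitution and establishes uniqueness by taking the difference $\matY$ of two solutions, iterating $\matY = \matB\,\matY\,\matZ$ to $\matY = \matB^n\,\matY\,\matZ^n$, and letting $n\to\infty$. Your Kronecker/vectorization argument is a sound alternative route for uniqueness, but the paper does not use it; your norm bookkeeping is also more explicit than the paper's one-line appeal to the spectral hypotheses.
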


The next proposition shows that, in order to compute the Laplace transform of the stationary sojourn time $S^*$ in terms of a finite number of addends, only the value of $\phi(R,0,s)$ is needed.
\begin{proposition}\label{prop:lap.trans}
The Laplace transform of $S^*$ can be computed in the form
\begin{align}\label{eq:Laplace.eq.insp}
\psi(s)
= & \sum_{n=0}^{K-1} \psi(n+1,0,s) \, \pi_n + \phi(R,0,s) \, \pi_K \ .
\end{align}
\end{proposition}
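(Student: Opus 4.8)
The plan is to start from the PASTA formula \eqref{eq:Laplace.eq.inf.insp} and split the infinite sum at the threshold $K$, exactly as was done in the scalar case in the proof of Proposition \ref{prop:Lapl.trans}. Writing
\[
\psi(s) = \sum_{n=0}^{K-1} \psi(n+1,0,s)\,\pi_n + \sum_{n=K}^{\infty} \psi(n+1,0,s)\,\pi_n,
\]
the first sum is already in the desired finite form, so the whole task is to identify the tail $\sum_{n=K}^{\infty} \psi(n+1,0,s)\,\pi_n$ with $\phi(R,0,s)\,\pi_K$. First I would re-index the tail by setting $n = K+h$, turning it into $\sum_{h=0}^{\infty} \psi(K+h+1,0,s)\,\pi_{K+h}$.

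Next I would invoke the matrix-geometric form of the stationary distribution \eqref{eq:pi.K+h}, namely $\pi_{K+h} = R^{h}\,\pi_K$ for $h\ge 0$, to rewrite the tail as $\sum_{h=0}^{\infty} \psi(K+h+1,0,s)\,R^{h}\,\pi_K$. Since $\pi_K$ does not depend on $h$, it can be pulled out on the right, giving $\bigl(\sum_{h=0}^{\infty} \psi(K+h+1,0,s)\,R^{h}\bigr)\pi_K$. By the definition \eqref{eq:phi.def.insp} of the matrix generating function with $m=0$ and $\matZ = R$, the bracketed sum is precisely $\phi(R,0,s)$, which yields \eqref{eq:Laplace.eq.insp}. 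The only point needing care is that $\phi(\matZ,0,s)$ was defined only for matrices $\matZ$ whose eigenvalues lie in the open unit disk, so I must check that $R$ qualifies: from the explicit triangular form \eqref{eq:matrix.R}, the eigenvalues of $R$ are $R_{00}$ and $\lambda/\mu_1$, and stability together with the fact that $R$ is the minimal nonnegative solution of \eqref{eq:R.mat.eq} guarantees both are strictly less than one, so $\phi(R,0,s)$ is well defined (and the rearrangement of the absolutely convergent series is legitimate, each $\psi_i(K+h+1,0,s)$ being bounded by one).

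The main obstacle is essentially bookkeeping rather than a genuine difficulty: one must make sure the interchange of summation and the factoring-out of $\pi_K$ is justified, which follows from absolute convergence since $\|\psi(K+h+1,0,s)\|$ is bounded and the entries of $R^{h}$ decay geometrically in $h$ because the spectral radius of $R$ is below one. Once that is in place the identity is immediate from \eqref{eq:pi.K+h} and the definition \eqref{eq:phi.def.insp}. I would therefore keep the written proof short: re-index, substitute \eqref{eq:pi.K+h}, factor out $\pi_K$, and recognize $\phi(R,0,s)$.
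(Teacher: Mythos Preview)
Your proposal is correct and follows exactly the same route as the paper's own proof: split the sum in \eqref{eq:Laplace.eq.inf.insp} at $n=K$, use $\pi_{K+h}=R^h\pi_K$ from \eqref{eq:pi.K+h}, and recognize the resulting series as $\phi(R,0,s)$ from \eqref{eq:phi.def.insp}. The paper states this in two lines without the convergence check, but your added verification that the eigenvalues of $R$ lie in the open unit disk is a welcome point of rigor.
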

\begin{proof}
The result follows from \eqref{eq:Laplace.eq.inf.insp} by splitting the sum in a finite, $n < K$, and infinite part.
For the latter part, we express $\pi_{K+h}$ as in \eqref{eq:pi.K+h} for $h\ge0$, and apply definition \eqref{eq:phi.def.insp}.
\end{proof}

The computation of $\phi(R,0,s)$ requires some additional machinery with respect to the one developed in Section \ref{sc:immediate-switching} for the scalar case.
Before giving the statement of the main result we need the following technical lemma, the proof of which is deferred to the appendix. The lemma states that the infinite sum of matrices appearing at the left-hand side of \eqref{eq:rel.S} can be recognized as a matrix function $\matS$, which can be computed from the matrix system \eqref{eq:S.syst}.
\begin{lemma}\label{lm:rel.S}
Let $\matZ$, $\matA$ and $\matB$ be three matrices with $\matZ$ and $\matB$ having all eigenvalues in the closed and open disk, respectively, and let $T_1$ and $T_2$ be invertible matrices with $T_1$ having all eigenvalues in the open disk, then the following relation holds,
\begin{equation}\label{eq:rel.S}
\sum_{h=0}^\infty 
 \matS(\matT_2 \, \matT_1^h \,\matZ \, \matT_1^{-h} \, \matT_2^{-1}, \matA, \matB) 
 \, \matT_2 \, \matT_1^h \, \matZ^h 
=
\matS(\matZ, \matA \, \matT_2 \, \matS(\matZ, I, \matT_1) , \matB) \, .
\end{equation}
\end{lemma}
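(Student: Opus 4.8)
The plan is to prove the identity \eqref{eq:rel.S} by showing that the left-hand side, call it $\matL\matH\matS$, satisfies the same matrix system \eqref{eq:S.syst} that characterises the right-hand side, and then invoke the uniqueness part of Lemma \ref{lm:S.function}. Concretely, the right-hand side $\matS(\matZ,\matA\,\matT_2\,\matS(\matZ,I,\matT_1),\matB)$ is by definition the unique solution $\matS$ of $\matS - \matB\,\matS\,\matZ = \matA\,\matT_2\,\matS(\matZ,I,\matT_1)$; so it suffices to verify that $\matL\matH\matS$ also solves this equation. First I would check that $\matL\matH\matS$ is well defined: each summand is a matrix function $\matS$ with first argument $\matT_2\,\matT_1^h\,\matZ\,\matT_1^{-h}\,\matT_2^{-1}$, which is a similarity transform of $\matZ$ and hence has the same (closed-disk) eigenvalues, so Lemma \ref{lm:S.function} applies termwise; the outer sum converges because $\matT_1$ has spectral radius below one, making $\matT_2\,\matT_1^h\,\matZ^h\to 0$ geometrically.

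Next I would compute $\matL\matH\matS - \matB\,\matL\matH\matS\,\matZ$ term by term. For the $h$-th summand, write $\matZ_h \defeq \matT_2\,\matT_1^h\,\matZ\,\matT_1^{-h}\,\matT_2^{-1}$ and $\matW_h \defeq \matT_2\,\matT_1^h\,\matZ^h$, so the summand is $\matS(\matZ_h,\matA,\matB)\,\matW_h$. Applying $\matS - \matB\,\matS\,\matZ$ to the whole sum and using the defining relation $\matS(\matZ_h,\matA,\matB) - \matB\,\matS(\matZ_h,\matA,\matB)\,\matZ_h = \matA$ from Lemma \ref{lm:S.function}, I would rewrite $\matB\,\matS(\matZ_h,\matA,\matB)\,\matW_h\,\matZ$ using the key algebraic observation that $\matZ_h\,\matW_{h+1} = \matW_h\,\matZ$. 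Indeed, $\matZ_h\,\matW_{h+1} = \matT_2\,\matT_1^h\,\matZ\,\matT_1^{-h}\,\matT_2^{-1}\cdot\matT_2\,\matT_1^{h+1}\,\matZ^{h+1} = \matT_2\,\matT_1^h\,\matZ\,\matT_1\,\matZ^{h+1}$; this does not literally equal $\matW_h\,\matZ = \matT_2\,\matT_1^h\,\matZ^h\,\matZ = \matT_2\,\matT_1^h\,\matZ^{h+1}$ unless $\matZ$ and $\matT_1$ interact suitably, so I would instead track the telescoping more carefully: $\matB\,\matS(\matZ_h,\matA,\matB)\,\matZ_h\,\matW_{h+1} = \big(\matS(\matZ_h,\matA,\matB)-\matA\big)\,\matW_{h+1}$ is not what appears either. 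The correct route is to multiply the defining relation on the right by $\matW_h$ and observe $\matZ_h\,\matW_h$ telescopes against $\matW_{h-1}\,\matZ$; that is, I would establish $\matZ_h\,\matW_h = \matW_{h-1}\,\matZ$ (checking $\matT_2\,\matT_1^h\,\matZ\,\matT_1^{-h}\,\matT_2^{-1}\cdot\matT_2\,\matT_1^h\,\matZ^h = \matT_2\,\matT_1^h\,\matZ^{h+1}$ versus $\matW_{h-1}\,\matZ = \matT_2\,\matT_1^{h-1}\,\matZ^{h-1}\,\matZ = \matT_2\,\matT_1^{h-1}\,\matZ^h$ — again a mismatch by one factor of $\matT_1$). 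So the genuine mechanism must be a shifted telescoping in which the $\matB\,(\cdot)\,\matZ$ term of summand $h$ cancels against part of summand $h$ itself after reindexing, leaving only the $h=0$ boundary contribution; pinning down this cancellation is the technical heart of the argument.

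Granting that the telescoping produces $\matL\matH\matS - \matB\,\matL\matH\matS\,\matZ = \sum_{h\ge0}\matA\,\matW_h = \matA\,\matT_2\sum_{h\ge0}\matT_1^h\,\matZ^h = \matA\,\matT_2\,\matS(\matZ,I,\matT_1)$, where the last equality is just the definition \eqref{def:s}, I would conclude by uniqueness (Lemma \ref{lm:S.function}, applicable since $\matB$ has eigenvalues in the open disk and $\matZ$ in the closed disk) that $\matL\matH\matS$ equals $\matS(\matZ,\matA\,\matT_2\,\matS(\matZ,I,\matT_1),\matB)$, which is precisely the right-hand side of \eqref{eq:rel.S}.

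The hard part will be making the telescoping in the middle step rigorous: one must keep the similarity conjugations $\matT_2\,\matT_1^h(\cdot)\matT_1^{-h}\,\matT_2^{-1}$ and the weight matrices $\matW_h = \matT_2\,\matT_1^h\,\matZ^h$ in lockstep so that the "$-\matB\,\matS\,\matZ$" piece of each term recombines correctly after reindexing $h\mapsto h+1$, and to be careful that the identity $\matT\,\matZ^h\,\matT^{-1} = (\matT\,\matZ\,\matT^{-1})^h$ (already used for \eqref{eq:phi.rel.insp}) is invoked in exactly the right places, including the existence of $\matT_1^{-1}$ and $\matT_2^{-1}$. Once that bookkeeping is set up, the remaining manipulations are routine applications of Lemma \ref{lm:S.function} and the geometric-series definition \eqref{def:s}.
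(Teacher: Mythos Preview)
Your strategy---show that the left-hand side satisfies the defining equation $\matS - \matB\,\matS\,\matZ = \matA\,\matT_2\,\matS(\matZ,I,\matT_1)$ and then invoke the uniqueness in Lemma~\ref{lm:S.function}---is sound and in fact different from the paper's proof, which simply expands each $\matS(\matZ_h,\matA,\matB)$ as the double series $\sum_k \matB^k\matA\,\matZ_h^k$, uses $(\matT_2\matT_1^h\matZ\,\matT_1^{-h}\matT_2^{-1})^k=\matT_2\matT_1^h\matZ^k\matT_1^{-h}\matT_2^{-1}$ to cancel the conjugating factors against $\matW_h$, and then regroups the resulting $\sum_{h,k}\matB^k\matA\,\matT_2\matT_1^h\matZ^{k+h}$ into $\matS(\matZ,\matA\,\matT_2\,\matS(\matZ,I,\matT_1),\matB)$. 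Your route would be slicker, avoiding the double sum entirely.

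The gap is that you never found the algebraic identity that makes your argument work, and you ended up ``granting'' the crucial step. You tried $\matZ_h\matW_{h+1}$ versus $\matW_h\matZ$, then $\matZ_h\matW_h$ versus $\matW_{h-1}\matZ$, and both failed; but you did not try the pairing that is actually correct, namely $\matZ_h\matW_h = \matW_h\matZ$. Indeed
\[
\matZ_h\,\matW_h = \matT_2\matT_1^h\matZ\,\matT_1^{-h}\matT_2^{-1}\cdot\matT_2\matT_1^h\matZ^h = \matT_2\matT_1^h\matZ^{h+1} = \matT_2\matT_1^h\matZ^h\cdot\matZ = \matW_h\,\matZ .
\]
With this in hand there is no telescoping or reindexing at all: for each $h$ separately,
\[
\matS_h\matW_h - \matB\,\matS_h\matW_h\matZ = \matS_h\matW_h - \matB\,\matS_h\matZ_h\matW_h = (\matS_h-\matB\,\matS_h\matZ_h)\matW_h = \matA\,\matW_h,
\]
using the defining relation for $\matS_h=\matS(\matZ_h,\matA,\matB)$. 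Summing over $h$ gives exactly $\matA\,\matT_2\,\matS(\matZ,I,\matT_1)$, and uniqueness finishes the proof. So your plan was one line away from complete; the ``shifted telescoping'' you were searching for does not exist and is not needed.
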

The following result allows us to compute the value of $\phi(R,m,s)$ in finitely many steps.

\begin{theorem}\label{th:phi.rec.insp}
The values of $\phi(\matZ,m,s)$ for $0 \leq m \leq K$ can be computed by the following equation
\begin{align}\label{eq:phi.rec.insp}
\phi(\matZ,m,s)
= &  \sum_{k=m}^{K-1} \psi(K,k,s) \, \matT_M(s) \, \matU_M(\matZ,k-m,s) \nonumber \\
& + \psi(K,K+1,s) \, \matT(s) \, \matU(\matZ,K-m,s)  \ ,
\end{align}
where the matrices $\matU_M(\matZ,k,s)$ and $\matU(\matZ,k,s)$ are defined as
\begin{eqnarray*}
\matU_M(\matZ,k,s) & = &
\matS(\matZ,(\matT_{\matL}(s) \, \matS(\matZ,I,\matT_M(s)))^k,\matT_M(s)) \ ,
\\
\matU(\matZ,k,s) & = &
\matS(\matZ,(\matT_{\matL}(s) \, \matS(\matZ, I, \matT_M(s)) )^k,\matT(s)) \ .
\end{eqnarray*}
\end{theorem}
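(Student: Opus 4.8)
The plan is to establish \eqref{eq:phi.rec.insp} by downward induction on $m$, running from $m=K$ down to $m=0$, with \eqref{eq:phi.rel.insp} driving the step and \eqref{eq:phi(z,K,s).insp} providing the base. For $m=K$ the sum over $k$ in \eqref{eq:phi.rec.insp} is empty and $\matU(\matZ,0,s)=\matS(\matZ,I,\matT(s))$, so the right-hand side reduces to $\psi(K,K+1,s)\,\matT(s)\,\matS(\matZ,I,\matT(s))$; since $\psi(K,K+1,s)=e\,\matT^{K}(s)$ by \eqref{eq:only.H.insp}, this is exactly \eqref{eq:phi(z,K,s).insp}.

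For the inductive step, fix $0\le m<K$ and assume \eqref{eq:phi.rec.insp} holds at level $m+1$ for every admissible $\matZ$. First I would resolve the self-referential middle term of \eqref{eq:phi.rel.insp} by iterating it; because $\matT_M(s)$ and $\matZ$ both have spectral radius below one, the remainder $\phi\bigl(\matT_M^{n}(s)\,\matZ\,\matT_M^{-n}(s),m,s\bigr)\,\matT_M^{n}(s)\,\matZ^{n}$ tends to zero (an easy bound using the eigenvalue hypotheses and $|\psi_i|\le1$), which gives
\begin{equation*}
\phi(\matZ,m,s)=\psi(K,m,s)\,\matT_M(s)\,\matS(\matZ,I,\matT_M(s))+\sum_{h\ge0}\phi(\matW_h,m+1,s)\,\matT_{\matL}(s)\,\matT_M^{h}(s)\,\matZ^{h},
\end{equation*}
with $\matW_h:=\matT_{\matL}(s)\,\matT_M^{h}(s)\,\matZ\,\matT_M^{-h}(s)\,\matT_{\matL}^{-1}(s)$. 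Plugging the induction hypothesis into $\phi(\matW_h,m+1,s)$ and noting $\matU_M(\matZ,0,s)=\matS(\matZ,I,\matT_M(s))$, the claim at level $m$ reduces to the two ``collapsing'' identities, valid for all $j\ge0$,
\begin{equation*}
\sum_{h\ge0}\matU_M(\matW_h,j,s)\,\matT_{\matL}(s)\,\matT_M^{h}(s)\,\matZ^{h}=\matU_M(\matZ,j+1,s),\qquad \sum_{h\ge0}\matU(\matW_h,j,s)\,\matT_{\matL}(s)\,\matT_M^{h}(s)\,\matZ^{h}=\matU(\matZ,j+1,s),
\end{equation*}
since applying the first with $j=k-m-1$ (for $m+1\le k\le K-1$) and the second with $j=K-m-1$ turns the level-$(m+1)$ expansion into the level-$m$ one, including the missing $k=m$ term $\psi(K,m,s)\,\matT_M(s)\,\matU_M(\matZ,0,s)$.

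The collapsing identities are where the real work sits, and I would prove them by an inner induction on $j$. For $j=0$ they are precisely Lemma \ref{lm:rel.S} with $\matT_1=\matT_M(s)$, $\matT_2=\matT_{\matL}(s)$, $\matA=I$ and $\matB\in\{\matT_M(s),\matT(s)\}$. For the step I would expand the outer $\matS$ defining $\matU_M(\matW_h,j,s)$ (resp.\ $\matU(\matW_h,j,s)$) via \eqref{def:s}, use the elementary identity $\matW_h^{\ell}\,\bigl(\matT_{\matL}(s)\,\matT_M^{h}(s)\,\matZ^{h}\bigr)=\matT_{\matL}(s)\,\matT_M^{h}(s)\,\matZ^{\ell+h}$ — the same mechanism that underlies Lemma \ref{lm:rel.S} — to carry the weights to the right, recognise the surviving $h$-sum as $\matU_M(\matZ,j,s)$ by the inner hypothesis, and arrive at $\matS(\matZ,\matT_{\matL}(s)\,\matU_M(\matZ,j,s),\matT_M(s))=\matU_M(\matZ,j+1,s)$ (resp.\ $\matS(\matZ,\matT_{\matL}(s)\,\matU_M(\matZ,j,s),\matT(s))=\matU(\matZ,j+1,s)$), i.e.\ the recursive reformulation of the definitions of $\matU_M$ and $\matU$ in the statement. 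I expect the main obstacle to be organizational rather than conceptual: all of this takes place in a noncommutative matrix algebra and the conjugations $\matT_{\matL}(s)\,\matT_M^{h}(s)(\cdot)\matT_M^{-h}(s)\,\matT_{\matL}^{-1}(s)$ commute with nothing, so one must keep every product in the correct order and carefully justify each rearrangement of the (absolutely convergent, by the eigenvalue assumptions on $\matZ$ and by $\rho(\matT_M(s))<1$, $\rho(\matT(s))<1$) infinite series.
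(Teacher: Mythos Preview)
Your plan mirrors the paper's proof: same base case at $m=K$, same iteration of \eqref{eq:phi.rel.insp} into the series form (the paper's \eqref{eq:phi.rec.inf.insp}, identical to your displayed identity), and the same downward induction in $m$ by feeding the level-$(m{+}1)$ formula through the $h$-sum. The paper does not set up an inner induction on $j$; it invokes Lemma~\ref{lm:rel.S} in one shot with the \emph{fixed} middle matrix $\matA=Y^{k-m-1}$, $\matT_1=\matT_M(s)$, $\matT_2=\matT_{\matL}(s)$ and $\matB\in\{\matT_M(s),\matT(s)\}$, so that each $h$-sum collapses directly to $\matS(\matZ,\,Y^{k-m-1}\!\cdot Y,\,\matB)=\matS(\matZ,Y^{k-m},\matB)$, which is exactly $\matU_M(\matZ,k{-}m,s)$ or $\matU(\matZ,K{-}m,s)$. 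Your two ``collapsing identities'' are precisely these applications of the lemma.

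There is, however, a genuine gap in your inner inductive step. You finish by asserting the ``recursive reformulation'' $\matU_M(\matZ,j{+}1,s)=\matS\bigl(\matZ,\,\matT_{\matL}(s)\,\matU_M(\matZ,j,s),\,\matT_M(s)\bigr)$ (and the analogue for $\matU$). But the statement defines $\matU_M(\matZ,j{+}1,s)=\matS(\matZ,Y^{j+1},\matT_M)$ with $Y=\matT_{\matL}\,\matS(\matZ,I,\matT_M)$, so your recursion would force $\matT_{\matL}\,\matS(\matZ,Y^{j},\matT_M)=Y^{j+1}=\matT_{\matL}\,\matS(\matZ,I,\matT_M)\,Y^{j}$, i.e.\ $\matS(\matZ,Y^{j},\matT_M)=\matS(\matZ,I,\matT_M)\,Y^{j}$. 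Expanding both sides via \eqref{def:s} gives $\sum_{\ell}\matT_M^{\ell}\,Y^{j}\,\matZ^{\ell}$ against $\sum_{\ell}\matT_M^{\ell}\,\matZ^{\ell}\,Y^{j}$; these agree only when $Y^{j}$ commutes with $\matZ$, which it does not in this noncommutative setting. (Relatedly, the ``surviving $h$-sum'' after your steps is $\sum_h Y(W_h)^{j}\,\matT_{\matL}\matT_M^{h}\matZ^{h}$, which is not of the form $\sum_h \matU_M(W_h,\cdot)\,\matT_{\matL}\matT_M^{h}\matZ^{h}$, so your inner hypothesis does not literally apply to it.) The paper's single-shot use of Lemma~\ref{lm:rel.S} sidesteps this: the lemma right-multiplies a fixed $\matA=Y^{k-m-1}$ by $\matT_{\matL}\,\matS(\matZ,I,\matT_M)=Y$, yielding $Y^{k-m}$ without any commutation of $Y$ with~$\matZ$.
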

\begin{proof}
Using \eqref{eq:phi(z,K,s).insp} and \eqref {eq:only.H.insp}, it follows that equation \eqref{eq:phi.rec.insp} holds for $m=K$, 
where it is assumed that the value of the sum is zero.
We prove by induction that it also holds for all $m<K$.
We first derive a recursive equation satisfied by $\phi(\cdot,m,s)$ in terms of $\phi(\cdot,m+1,s)$.

By substituting
$\matT_M(s) \, \matZ \, \matT_M^{-1}(s)$ for $Z$ in \eqref{eq:phi.rel.insp} we get an expression for 
$\phi(\matT_M(s) \, \matZ \, \matT_M^{-1}(s),m,s)$, and subsequently substituting this expression in the right-hand side of  \eqref{eq:phi.rel.insp}, yields
\begin{align}\label{eq:phi.rec.1.insp}
\phi(\matZ,m,s)= &
  \psi(K,m,s) \, \matT_M(s) +  \psi(K,m,s)  \, \matT_M^2(s)  \, \matZ \nonumber \\
& + \phi(\matT_M^2(s) \, \matZ \, \matT_M^{-2}(s),m,s) \, \matT_M^2(s) \,  \matZ^2 \nonumber \\
& + \phi(\matT_{\matL}(s) \,\matT_M(s) \, \matZ \, \matT_M^{-1}(s) \, \matT_{\matL}^{-1}(s),m+1,s)
\, \matT_{\matL}(s)  \, \matT_M(s) \,  \matZ \nonumber \\
& + \phi(\matT_{\matL}(s) \,\matZ \, \matT_{\matL}^{-1}(s),m+1,s) \, \matT_{\matL}(s) 
\end{align}
and iterating this equation leads to
\begin{align*}
\phi(\matZ,m,s) = 
& \psi(K,m,s)   \,\matT_M(s) \, \left(\sum_{h=0}^\infty \matT_M^h(s) \, \matZ^h \right) \nonumber \\
& + \sum_{h=0}^\infty \phi(\matT_{\matL}(s) \,\matT_M^h(s) \, \matZ \, \matT_M^{-h}(s)\, \matT_{\matL}^{-1}(s),m+1,s) 
\, \matT_{\matL}(s)  \, \matT_M^h(s) \, \matZ^h \ ,
\end{align*}
which can be rewritten as
\begin{align}\label{eq:phi.rec.inf.insp}
\phi(\matZ,m,s)
= & \psi(K,m,s) \,\matT_M(s) \, \matS(\matZ,I,\matT_M) \\
& + \sum_{h=0}^\infty \phi(\matT_{\matL}(s) \,\matT_M^h(s) \, \matZ \, \matT_M^{-h}(s)\, \matT_{\matL}^{-1}(s),m+1,s) 
\, \matT_{\matL}(s)  \, \matT_M^h(s) \, \matZ^h \ . \nonumber
\end{align}
The recursive equation \eqref{eq:phi.rec.inf.insp} is valid for $m = 0, 1, \ldots, K-1$.

We conjecture that for all $m = 0, 1, \ldots, K$, the generating function $\phi(\matZ,m,s)$ has the form
\begin{align}\label{eq:phi.conj.insp}
\phi(\matZ,m,s)
= &  \sum_{k=m}^{K-1} \psi(K,k,s)  \,\matT_M(s) \, \matS(\matZ,Y^{k-m}(s),\matT_M(s)) \\
& + \psi(K,K+1,s) \, \matT(s) \, \matS(\matZ,Y^{K-m}(s),\matT(s)) \ , \nonumber
\end{align}
so that \eqref{eq:phi.rec.insp} follows by showing that the right expression for $Y(s)$ is given by
\begin{equation}\label{eq:Y(s)}
  Y(s) = \matT_{\matL}(s) \, \matS(\matZ, I, \matT_M(s)) \ .
\end{equation}
This conjecture will be proved by induction. We have already shown that it holds for $m=K$. 
Now assume that it is valid for $m+1$. To establish \eqref{eq:phi.conj.insp} for $m$, it suffices to prove, by virtue of \eqref{eq:phi.rec.inf.insp}, that \begin{align}\label{eq:ind.0.insp}
&\sum_{h=0}^\infty \phi(\matT_{\matL}(s) \,\matT_M^h(s) \, \matZ \, \matT_M^{-h}(s)\, \matT_{\matL}^{-1}(s),m+1,s)
\matT_{\matL}(s) \, \matT_M^h(s)  \, \matZ^h  \nonumber \\
& = \sum_{k=m+1}^{K-1} \psi(K,k,s) \, \matT_M(s) \, \matS(\matZ,Y^{k-m}(s),\matT_M(s)) \\
& \quad \quad \quad + \psi(K,K+1,s) \, \matT(s) \, \matS(\matZ,Y^{K-m}(s),\matT(s)) \ . \nonumber
\end{align}
It follows from
Lemma \ref{lm:rel.S} that
\begin{align}\label{eq:ind.1.insp}
 \psi(K,K+1) \, \matT  \sum_{h=0}^\infty & 
  \matS(\matT_{\matL} \,\matT_M^h \, \matZ \, \matT_M^{-h}\, \matT_{\matL}^{-1},
    Y^{K-m-1},\matT) \, \matT_{\matL} \, \matT_M^h  \, \matZ^h \nonumber \\
& = \psi(K,K+1) \, \matT \matS(\matZ, \, Y^{K-m-1} \, \matT_{\matL} \, \matS(\matZ, I, \matT_M)  , \matT) \ ,
\end{align}
where we suppressed the dependence on $s$. Application of Lemma \ref{lm:rel.S} is justified, since it 
is readily verified that the matrices in the above infinite sum satisfy the conditions mentioned in this lemma.
Accordingly, for $k=m+1,\ldots,K-1$, and again suppressing the dependence on $s$,
\begin{align}\label{eq:ind.2.insp}
\psi(K,k) \, \matT_M 
\sum_{h=0}^\infty & \matS(\matT_{\matL} \,\matT_M^h \, \matZ \, \matT_M^{-h}\, \matT_{\matL}^{-1},
   Y^{k-m-1},\matT_M) \, \matT_{\matL} \, \matT_M^h \, \matZ^h \nonumber \\
& = \psi(K,k) \, \matT_M \, \matS(\matZ,  Y^{k-m-1}  \, \matT_{\matL} \, \matS(\matZ, I, \matT_M) , \matT_M) \ .
\end{align}
Combining \eqref{eq:ind.1.insp} and \eqref{eq:ind.2.insp} we conclude, by virtue of the induction hypothesis, that \eqref{eq:ind.0.insp}
holds whenever $Y(s)$ satisfies \eqref{eq:Y(s)}, which completes the proof. \end{proof}

\begin{remark}\label{rm:exp.sw.closed-formulas}Also in this case, as was already mentioned in Remark \ref{rm:inv.lap.scalar}, the Laplace transform of the sojourn time is rational.
This admits application of classical inversion techniques, yielding an explicit expression for the sojourn time distribution.
In Section \ref{anal-example} we give an example of how to compute the density function of the sojourn time for a system with  $K=2$.
\end{remark}
\subsection{Erlang inspection times}
\label{Erlang-example}
In section \ref{sc:inspection-times} we assumed exponential inter-inspection times. 
In principle this can be extended to the case of phase-type distributed inter-inspection times \cite{asmussen}, paying a cost in terms of model complexity. 
Indeed, in this case one should keep track, not only of the value of the service rate, but also of the phase of the inspection-clock. This translates into more complicated matrix expressions, but the basic logic of the computation of the sojourn time distribution remains the same. 
In fact, this is exactly the power of the proposed matrix generating function technique. 
For the sake of clarity and conciseness we are not going to treat here this extension in detail, but give a quick view of how it can be handled.

We assume that the inspection times are Erlang(2,$\gamma$) distributed. To keep trace of this we consider four states in the description of the system, $\{00,01,10,11\}$, 
where the first number specifies the speed of the system  and the second the phase of the inspection clock. 

\begin{figure*}[h]\centering
\newif\ifrepONE
\repONEtrue 
\newcommand{\nodet}[2]{%
  \ifrepONE #1 \else
   \dfrac{#1}%
    {\hbox{\hspace*{0.25cm}#2\hspace*{0.25cm}}}
\fi}
\newcommand{\labelt}[1]{\ifrepONE #1 \fi}
\begin{tikzpicture}[->,shorten >=1pt,node distance=.75cm,
state/.style={circle,draw,minimum size=0.8cm}]
\tiny 
\ifrepONE
  \tikzstyle{label}=[minimum size=0.8cm] 
  \node[label] (La) [] {\labelt{Low rate (a):}};
  \node[label] (Lb) [below=of La] {\labelt{Low rate (b):}};
  \node[label] (Ha) [below=of Lb] {\labelt{High rate (a):}};
  \node[label] (Hb) [below=of Ha] {\labelt{High rate (b):}};
  \node[state] (0La) [right=of La] {$\nodet{0}{La}$};
  \node[state] (0Lb) [right=of Lb] {$\nodet{0}{Lb}$};
  \node[state] (0Ha) [right=of Ha] {$\nodet{0}{Ha}$};
  \node[state] (0Hb) [right=of Hb] {$\nodet{0}{Hb}$};
\else
  \node[state] (0La) [] {$\nodet{0}{La}$};
  \node[state] (0Lb)  [below=of 0La] {$\nodet{0}{Lb}$};
  \node[state] (0Ha) [below=of 0Lb] {$\nodet{0}{Ha}$};
  \node[state] (0Hb) [below=of 0Ha] {$\nodet{0}{Hb}$};
\fi
\node[state] (1La) [right=of 0La] {$\nodet{1}{La}$};
\node[state,draw=none] (2La) [right=of 1La] {$\ldots$};
\node[state] (KLa) [right=of 2La] {$\nodet{K}{La}$};
\node[state] (K+1La) [right=of KLa] {$\nodet{K+1}{La}$};
\node[state] (K+2La) [right=of K+1La] {$\nodet{K+2}{La}$};
\node[state,draw=none] (K+3La) [right=of K+2La] {$\ldots$};
\node[state] (1Lb) [right=of 0Lb] {$\nodet{1}{Lb}$};
\node[state,draw=none] (2Lb) [right=of 1Lb] {$\ldots$};
\node[state] (KLb) [right=of 2Lb] {$\nodet{K}{Lb}$};
\node[state] (K+1Lb) [right=of KLb] {$\nodet{K+1}{Lb}$};
\node[state] (K+2Lb) [right=of K+1Lb] {$\nodet{K+2}{Lb}$};
\node[state,draw=none] (K+3Lb) [right=of K+2Lb] {$\ldots$};
\node[state] (1Ha) [right=of 0Ha] {$\nodet{1}{Ha}$};
\node[state,draw=none] (2Ha) [right=of 1Ha] {$\ldots$};
\node[state] (KHa) [right=of 2Ha] {$\nodet{K}{Ha}$};
\node[state] (K+1Ha) [right=of KHa] {$\nodet{K+1}{Ha}$};
\node[state] (K+2Ha) [right=of K+1Ha] {$\nodet{K+2}{Ha}$};
\node[state,draw=none] (K+3Ha) [right=of K+2Ha] {$\ldots$};
\node[state] (1Hb) [right=of 0Hb] {$\nodet{1}{Hb}$};
\node[state,draw=none] (2Hb) [right=of 1Hb] {$\ldots$};
\node[state] (KHb) [right=of 2Hb] {$\nodet{K}{Hb}$};
\node[state] (K+1Hb) [right=of KHb] {$\nodet{K+1}{Hb}$};
\node[state] (K+2Hb) [right=of K+1Hb] {$\nodet{K+2}{Hb}$};
\node[state,draw=none] (K+3Hb) [right=of K+2Hb] {$\ldots$};
\path [every node/.style={font=\tiny}]
	(0La)  edge [bend left] node [above] {$\lambda$} (1La);
\path [every node/.style={font=\tiny}]
	(1La)	 edge [bend left] node [above] {$\lambda$} (2La)
	     edge [bend left] node [below] {$\mu_0$} (0La)
	(2La)	 edge [bend left] node [above] {$\lambda$} (KLa)
	     edge [bend left] node [below] {$\mu_0$} (1La)
	(KLa)    edge [bend left] node [above] {$\lambda$}(K+1La)
	       edge [bend left] node [below] {$\mu_0$} (2La)
	(K+1La)  edge [bend left] node [above] {$\lambda$}(K+2La)
	     edge [bend left] node [below] {$\mu_0$} (KLa)
	(K+2La)  edge [bend left] node [above] {$\lambda$}(K+3La)
	       edge [bend left] node [below] {$\mu_0$} (K+1La)
	(K+3La)  edge [bend left] node [below] {$\mu_0$} (K+2La);
\path [every node/.style={font=\tiny}]
	(0Lb)  edge [bend left] node [above] {$\lambda$} (1Lb);
\path [every node/.style={font=\tiny}]
	(1Lb)	 edge [bend left] node [above] {$\lambda$} (2Lb)
	     edge [bend left] node [below] {$\mu_0$} (0Lb)
	(2Lb)	 edge [bend left] node [above] {$\lambda$} (KLb)
	     edge [bend left] node [below] {$\mu_0$} (1Lb)
	(KLb)    edge [bend left] node [above] {$\lambda$}(K+1Lb)
	       edge [bend left] node [below] {$\mu_0$} (2Lb)
	(K+1Lb)  edge [bend left] node [above] {$\lambda$}(K+2Lb)
	     edge [bend left] node [below] {$\mu_0$} (KLb)
	(K+2Lb)  edge [bend left] node [above] {$\lambda$}(K+3Lb)
	       edge [bend left] node [below] {$\mu_0$} (K+1Lb)
	(K+3Lb)  edge [bend left] node [below] {$\mu_0$} (K+2Lb);
\path [every node/.style={font=\tiny}]
	(0Ha)  edge [bend left] node [above] {$\lambda$} (1Ha);
\path [every node/.style={font=\tiny}]
	(1Ha)	 edge [bend left] node [above] {$\lambda$} (2Ha)
	     edge [bend left] node [below] {$\mu_1$} (0Ha)
	(2Ha)	 edge [bend left] node [above] {$\lambda$} (KHa)
	     edge [bend left] node [below] {$\mu_1$} (1Ha)
	(KHa)    edge [bend left] node [above] {$\lambda$}(K+1Ha)
	       edge [bend left] node [below] {$\mu_1$} (2Ha)
	(K+1Ha)  edge [bend left] node [above] {$\lambda$}(K+2Ha)
	     edge [bend left] node [below] {$\mu_1$} (KHa)
	(K+2Ha)  edge [bend left] node [above] {$\lambda$}(K+3Ha)
	       edge [bend left] node [below] {$\mu_1$} (K+1Ha)
	(K+3Ha)  edge [bend left] node [below] {$\mu_1$} (K+2Ha);
\path [every node/.style={font=\tiny}]
	(0Hb)  edge [bend left] node [above] {$\lambda$} (1Hb);
\path [every node/.style={font=\tiny}]
	(1Hb)	 edge [bend left] node [above] {$\lambda$} (2Hb)
	     edge [bend left] node [below] {$\mu_1$} (0Hb)
	(2Hb)	 edge [bend left] node [above] {$\lambda$} (KHb)
	     edge [bend left] node [below] {$\mu_1$} (1Hb)
	(KHb)    edge [bend left] node [above] {$\lambda$}(K+1Hb)
	       edge [bend left] node [below] {$\mu_1$} (2Hb)
	(K+1Hb)  edge [bend left] node [above] {$\lambda$}(K+2Hb)
	     edge [bend left] node [below] {$\mu_1$} (KHb)
	(K+2Hb)  edge [bend left] node [above] {$\lambda$}(K+3Hb)
	       edge [bend left] node [below] {$\mu_1$} (K+1Hb)
	(K+3Hb)  edge [bend left] node [below] {$\mu_1$} (K+2Hb);
\path [every node/.style={font=\tiny}]
	(0La)  edge [bend left] node [right] {$\gamma$} (0Lb)
	(1La)  edge [bend left] node [right] {$\gamma$} (1Lb)
	(KLa)  edge [bend left] node [right] {$\gamma$} (KLb)
	(K+1La)  edge [bend left] node [right] {$\gamma$} (K+1Lb)
	(K+2La)  edge [bend left] node [right] {$\gamma$} (K+2Lb);
\path [every node/.style={font=\tiny}]
	(0Lb)  edge [bend left] node [right] {$\gamma$} (0La)
	(1Lb)  edge [bend left] node [right] {$\gamma$} (1La)
	(KLb)  edge [bend left] node [right] {$\gamma$} (KLa)
	(K+1Lb)  edge [bend left] node [right] {$\gamma$} (K+1Ha)
	(K+2Lb)  edge [bend left] node [right] {$\gamma$} (K+2Ha);
\path [every node/.style={font=\tiny}]
	(0Ha)  edge [bend left] node [right] {$\gamma$} (0Hb)
	(1Ha)  edge [bend left] node [right] {$\gamma$} (1Hb)
	(KHa)  edge [bend left] node [right] {$\gamma$} (KHb)
	(K+1Ha)  edge [bend left] node [right] {$\gamma$} (K+1Hb)
	(K+2Ha)  edge [bend left] node [right] {$\gamma$} (K+2Hb);
\path [every node/.style={font=\tiny}]
	(0Hb)  edge [bend left] node [right] {$\gamma$} (0La)
	(1Hb)  edge [bend left] node [right] {$\gamma$} (1La)
	(KHb)  edge [bend left] node [right] {$\gamma$} (KLa)
	(K+1Hb)  edge [bend left] node [right] {$\gamma$} (K+1Ha)
	(K+2Hb)  edge [bend left] node [right] {$\gamma$} (K+2Ha);
\end{tikzpicture}
\caption{Transition diagram for Erlang-2 inspection times}
\label{fig:trans.diag.Erlang.inspections}       
\end{figure*}

The column vector $\pi_n=(\pi_{00n},\ldots,\pi_{11n})^\tr$ satisfies \eqref{eq:insp.stat.dist.rel} with the following matrices
\def\MATI{\left(\begin{array}{cc} 1 & 0 \\ 0 & 1 \end{array}\right)}
\begin{equation*}
\matM =  \left(\begin{array}{cc} \mu_0 & 0 \\ 0 & \mu_1 \end{array}\right) \otimes \MATI \ , \quad
\matL =   \left(\begin{array}{cc} \lambda & 0 \\ 0 & \lambda \end{array}\right) \otimes \MATI  \ ,
\end{equation*}
and $H_1 = \matL + \matG_2$,
$H_2 = \matM + \matL + \matG_2$
and $H_3 = \matM + \matL + \matG_3$, where
\begin{equation*}
\matG_2 =  \left(\begin{array}{cccc} 
\gamma & -\gamma & 0 & -\gamma \\
-\gamma & \gamma & 0 & 0  \\
0 & 0 & \gamma & 0 \\
0 & 0 & -\gamma & \gamma 
\end{array}\right) \ , \quad
\matG_3 =  \left(\begin{array}{cccc} 
\gamma & 0 & 0 & 0 \\
-\gamma & \gamma & 0 & 0  \\
0 & -\gamma & \gamma & -\gamma \\
0 & 0 & -\gamma & \gamma 
\end{array}\right) \ . \quad
\end{equation*}
The conditional sojourn times satisfy the following equation, see the corresponding formula \eqref{eq:soj.rel.insp},
\begin{align}\label{eq:soj.rel.insp.erl}
S_{ij}(n,m) &=
\frac{X}{\lambda+\mu_i+\gamma} +
 \left\{\begin{array}{ll}
S_{i,j}(n-1,m) & \mbox{w.p. } \mu_i/(\lambda+\mu_i+\gamma)  \\
S_{i,j}(n,m+1) & \mbox{w.p. } \lambda/(\lambda+\mu_i+\gamma)  \\
S_{h(i,j)}(n,m)  & \mbox{w.p. } \gamma/(\lambda+\mu_i+\gamma)
\end{array}
\right.
\end{align}
with $h(i,j)=h(i,j;n,m)=((1-j) \cdot i+j\cdot 1\{n + m > K\},(1-j))$.
It follows that the row vector $(\psi_{00}(n,m,s),\psi_{01}(n,m,s),\psi_{10}(n,m,s),\psi_{11}(n,m,s))$ satisfies
equation \eqref{eq:lap.rel.insp} with the matrices
$H(s) = (s+\gamma) \, \matI + \matL +\matM$
and 
\begin{equation*}
\matG_0 =  \left(\begin{array}{cccc} 
0 & \gamma & 0 & \gamma \\
\gamma & 0 & 0 & 0  \\
0 & 0 & 0 & 0 \\
0 & 0 & \gamma & 0 
\end{array}\right) \ , \quad
\matG_1 =  \left(\begin{array}{cccc} 
0 & 0 & 0 & 0 \\
\gamma & 0 & 0 & 0  \\
0 & \gamma & 0 & \gamma \\
0 & 0 & \gamma & 0 
\end{array}\right)  \ . \quad
\end{equation*}

Since the matrix equations for the Erlang inspection times are similar to the exponential inspection times,
all the subsequent matrix analysis in Proposition \ref{prop:lap.trans} and Theorem \ref{th:phi.rec.insp} are still valid.
The value of the matrix $R$ is now given by
$$R=
\left(
\begin{array}{cccc}
 R_{11} & 0 & 0 & 0 \\
 R_{21} & R_{11} & 0 & 0 \\
 R_{31} & R_{32} & R_{33} & R_{34} \\
 R_{41} & R_{42} & R_{34} & R_{33}
\end{array}
\right)
$$
with 
\begin{eqnarray*}
R_{11} &=& \frac{\gamma +\lambda +\mu_0-\sqrt{-4 \lambda  \mu_0+(\gamma +\lambda +\mu_0)^2}}{2 \mu_0} \ , \quad 
R_{21} = \frac{\gamma R_{11}}{\gamma +\lambda +\mu_0-2 \mu_0R_{11}} \ ; \\
R_{33} &=& \frac{\mu_1 (2 \gamma +3 \lambda +\mu_1)-\sqrt{\mu_1^2 \left(4 \gamma ^2+(\lambda -\mu_1)^2
                        +4 \gamma  (\lambda +\mu_1)\right)}}{4 \mu_1^2} \ ; \\
R_{34} &=& \frac{\lambda}{\mu_1 } - R_{33} \ , \quad
R_{31} = \frac{\gamma  (R_{41}+R_{21})+\mu_1 (R_{32} R_{21}+R_{41} R_{34})}{\gamma +\lambda -\mu_1 (-1+R_{11}+R_{33})}  \ ; \\
R_{32} &=& \frac{\gamma  R_{11} (-\gamma -\lambda +\mu_1 (-1+R_{11}+R_{33}))}{-(\gamma +\lambda -\mu_1 (-1+R_{11}+R_{33}))^2
+(\gamma + \mu_1 R_{34})^2}  \ ; \\
R_{41} &=& \frac{\gamma  R_{21} (\gamma +\mu_1 R_{34}) \left(\lambda ^2-2 \lambda  \mu_1 (-1+R_{33})-\mu_1^2 \left(R_{11}^2-(-1+R_{33})^2+R_{34}^2\right)\right)}{(2 \gamma +\lambda -\mu_1 (-1+R_{11}+R_{33}-R_{34}))^2 (\lambda -\mu_1 (-1+R_{11}+R_{33}+R_{34}))^2} \\
& & \frac{\gamma  R_{21} (\gamma +\mu_1 R_{34}) \left(2 \gamma  (\lambda -\mu_1 (-1+R_{33}+R_{34}))\right)}{(2 \gamma +\lambda -\mu_1 (-1+R_{11}+R_{33}-R_{34}))^2 (\lambda -\mu_1 (-1+R_{11}+R_{33}+R_{34}))^2} \ ; \\
R_{42} &=& \frac{\gamma  R_{11} (\gamma +\mu_1 R_{34})}{(2 \gamma +\lambda -\mu_1 (-1+R_{11}+R_{33}-R_{34})) (\lambda -\mu_1 (-1+R_{11}+R_{33}+R_{34}))} \ .
\end{eqnarray*}

\subsection{Analitical example}
\label{anal-example}
\definecolor{curveONE}{rgb}{0.2472, 0.24, 0.6}
\definecolor{curveTWO}{rgb}{0.6, 0.24, 0.442893}
\definecolor{curveTHREE}{rgb}{0.6, 0.547014, 0.24}
\definecolor{curveFOUR}{rgb}{0.24, 0.6, 0.33692}
\definecolor{curveFIVE}{rgb}{0.24, 0.353173, 0.6}
\definecolor{curveSIX}{rgb}{0.6, 0.24, 0.563266}
\definecolor{curveSEVEN}{rgb}{0.6, 0.426641, 0.24}
\definecolor{curveEIGHT}{rgb}{0.263452, 0.6, 0.24}
\definecolor{curveNINE}{rgb}{0.24, 0.473545, 0.6}
\definecolor{curveTEN}{rgb}{0.516361, 0.24, 0.6}

In this section we briefly show that by using Theorem \ref{th:phi.rec.insp}, we can get explicit expressions for the density function of the sojourn time in the system with inspection times, as highlighted in Remark \ref{rm:exp.sw.closed-formulas}.

The computations are simple, but tedious as they require extensive use of matrix calculus, and usually it is easy to be assisted by symbolic computational software as we do for this example. 

To make computations easy, we wisely select the values of the parameters of the system such that all coefficients turn out to be rational.

The parameters of the queue are
$$\mu_0 = 1; \quad  \mu_1 = 3/2; \quad \gamma = 1/8; \quad  \lambda=  9/8 \ .$$ %
For the moment we do not fix the threshold, later we consider explicitly the case $K=2$.
The above choice of the parameters gives  $R_{00} = 3/4$ in \eqref{eq:matrix.R}. The matrix $R$ and the matrix function  
$\matT(s)= \matM \, (H(s)-\matGr-\matL)^{-1}$, are given by 
\begin{equation*}
R = \left(
\begin{array}{cc}
 \frac{3}{4} & 0 \\
 \frac{1}{4} & \frac{3}{4}
\end{array}
\right)
; \quad 
\matT(s) = \left(
\begin{array}{cc}
 \frac{8}{9+8 s} & 0 \\
 \frac{3}{(3+2 s) (9+8 s)} & \frac{3}{3+2 s}
\end{array}
\right)
\end{equation*}
and the matrix functions 
$\matT_\matL(s) = \matL \, (H(s)-\matGr)^{-1}$ and 
$\matT_\matM(s) = \matM \, (H(s)-\matGr)^{-1}$
are equal to
$$
\matT_\matL(s) =\left(
\begin{array}{cc}
 \frac{9}{2 (9+4 s)} & 0 \\
 \frac{9}{2 (9+4 s) (21+8 s)} & \frac{9}{21+8 s}
\end{array}
\right)
; \quad 
\matT_\matM(s) = \left(
\begin{array}{cc}
 \frac{4}{9+4 s} & 0 \\
 \frac{6}{(9+4 s) (21+8 s)} & \frac{12}{21+8 s}
\end{array}
\right) \ .
$$

Solving the matrix system \eqref{eq:S.syst} we get the following expression for 
$\matS(R,I,\matT_M(s))$,
$$
\matS(R,I,\matT_M(s))
=\left(
\begin{array}{cc}
 \frac{9+4 s}{2 (3+2 s)} & 0 \\
 \frac{3 (3+s)}{2 (3+2 s)^2} & \frac{21+8 s}{4 (3+2 s)}
\end{array}
\right)$$
that allows the computation of the values of $\matU(R,k,s)$ and $\matU_M(R,k,s)$ for any $k\geq0$. As example we show such matrix functions for $k=2$, 
$$
\matU(R,2,s) =
\left(
\begin{array}{cc}
 \frac{81 (9+8 s)}{16 (3+2 s)^2 (3+8 s)} & 0 \\
 \frac{81 (69+88 s)}{16 (3+2 s)^2 (3+8 s)^2} & \frac{81}{4 (3+2 s) (3+8 s)}
\end{array}
\right)
; \quad 
\matU_M(R,2,s) =
\left(
\begin{array}{cc}
 \frac{81 (9+4 s)}{32 (3+2 s)^3} & 0 \\
 \frac{81 (30+11 s)}{32 (3+2 s)^4} & \frac{81 (21+8 s)}{64 (3+2 s)^3}
\end{array}
\right) \ .
$$
\begin{remark} The expressions of $\matS(R,I,\matT_M(s)))$, $\matU(R,k,s)$ and $\matU_M(R,k,s)$ do not depend on $K$, so they can be used for any value of the threshold. 
The values of $\psi(s)$, $\psi(n,0,s)$, $\pi_n$ and $\phi(R,0,s)$ in \eqref{eq:Laplace.eq.insp} do depend on $K$ via the respective formulas  \eqref{eq:Laplace.eq.insp}, \eqref{eq:lap.rel.insp}, \eqref{eq:norm.eq} and \eqref{eq:phi.rec.insp}.
\end{remark}

From here on we fix $K=2$. We have $\pi_K = (3807/60644, 1701/30322)^\tr$ and after recursively computing $\psi(k,0,s)$, for $k=1,2$, we finally get  $\psi(s)$,
\begin{align*}
\psi(s) =& -\frac{308367}{379025 (3+2 s)^4}-\frac{13923657}{9475625 (3+2 s)^3}-\frac{44764461}{47378125 (3+2 s)^2} -\frac{130808703}{236890625 (3+2 s)}\\
&-\frac{14013}{15161 (9+4 s)}+\frac{1587762}{9475625 (11+4 s)^3}-\frac{4755267}{24636625 (11+4 s)^2} -\frac{28797784929}{40034515625 (11+4 s)} \\
&+\frac{81216}{15161 (3+8 s)^2}+\frac{18144}{15161 (3+8 s)}+\frac{102060}{15161 (9+8 s)^2} +\frac{55081053}{20497672 (9+8 s)} \\
&-\frac{24064452}{9475625 (17+8 s)^3}-\frac{199526994}{47378125 (17+8 s)^2}+\frac{2950774277}{1895125000 (17+8 s)}+\frac{90111}{60644 (21+8 s)}
\end{align*}
the inverse-transform of which results into the following density function
\begin{align*}
f(t) &= -\frac{90111 e^{-21 t/8}}{485152}-\frac{14013 e^{-9 t/4}}{60644}+\frac{27 e^{-3 t/8} (84+47 t)}{15161}+\frac{729 e^{-9 t/8} (75557+23660 t)}{163981376}\\
&+{243 e^{-11 t/4} \left(-1896150448-127198500 t+13803075 t^2\right)}/{2562209000000}\\
&-{e^{-17 t/8} \left(-11803097108+3990539880 t+150402825 t^2\right)}/{60644000000}\\
&-{3 e^{-3 t/2} \left(697646416+596859480 t+232060950 t^2+21414375 t^3\right)}/{7580500000} \ .
\end{align*}
Figure \ref{fig:closed-form} plots the density functions of the sojourn time for $K=0,1,2,3$ using their exact expressions, 
instead of using numeric inverse transform as done later on in Section \ref{num-experiments}. 
\begin{figure}[h]%
\centering
\includegraphics[width=0.65\textwidth]{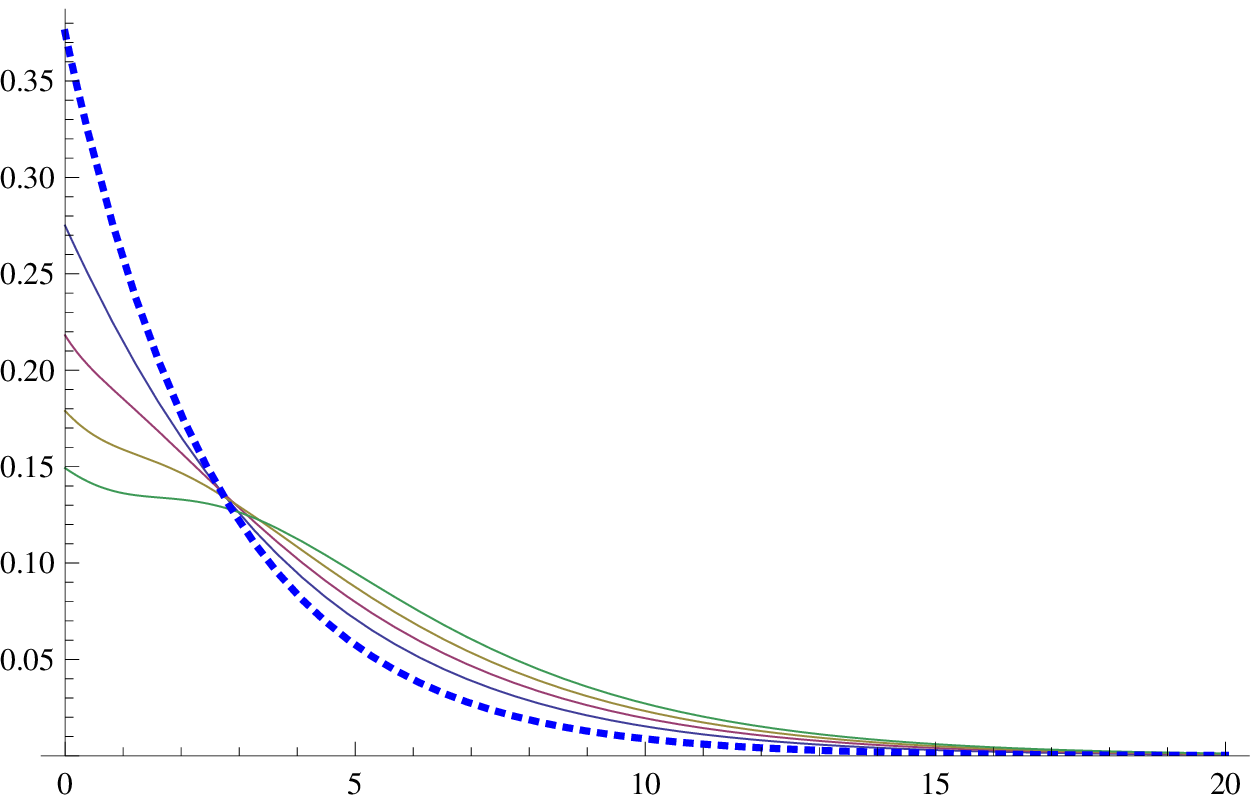}
  \caption{$\mu_0 = 1; \quad  \mu_1 = 3/2; \quad \gamma = 1/8; \quad  \lambda=  9/8;$ and  
$K\in\{%
{\color{curveONE} 0},
{\color{curveTWO} 1},
{\color{curveTHREE} 2},
{\color{curveFOUR} 3}
 \}$}
  \label{fig:closed-form}
\end{figure}%

\subsection{First moment calculation}
Like in the previous section, we define  $\nu = \E[S]$ and $\nu_{n,m}=\E[S(n,m)]$.
By taking derivatives in \eqref{eq:lap.rel.insp} and then computing the limit for $s\to0$ we get
\begin{align}\label{eq:av.lap.rel.insp}
\nu_{n,m} \, (\matH(0)-\matG_{1\{n+m>K\}})
= \nu_{n-1,m}  \, \matM  + \nu_{n,m+1} \, \matL + e \ ,
\end{align}
where we used that $H'(0)$ is the identity matrix.
The vector $e$ is the all-one vector.
From \eqref{eq:only.H.insp} and after taking derivatives, we obtain
\begin{equation}\label{eq:av.only.H.insp}
\nu_{n,m} = e \, \sum_{k=1}^n (\matT(0))^k \, \matM^{-1} \, (\matT(0))^{n-k+1}
  \quad \mbox{as } n > 0 \mbox{ and } m \geq K \ ,
\end{equation}
with
$\matT(0) = \matM \, (\matH(0) -\Gamma_1-\matL)^{-1}$,
$(\matT^{-1})'(0) = \matM^{-1}$
and
$\matT'(0) = -\matT(0) \matM^{-1} \matT(0)$. Here we used that the derivative of a matrix $\matA^{-n}$ is given by
\[
(\matA^{-n})' = \sum_{k=1}^n \matA^{-k} \, \matA' \matA^{k-n-1} \ .
\] 
By Proposition \ref{prop:lap.trans} we can conclude that 
\begin{equation}\label{eq:av.Laplace.eq.insp}
\nu =  \sum_{n=0}^{K-1} \nu_{n+1,0}  \, \pi_n
  -\phi'(\matZ,0,0+)  \, \pi_K \ .
\end{equation} 
From equation \eqref{eq:phi.rec.insp} we can compute
\begin{align}
-\phi'(\matZ,0,0+) = & \sum_{k=0}^{K-1} \nu_{K,k} \,\matT_M(0) \,\matU_M(\matZ,k,0) 
+ \nu_{K,K+1}  \,  \matT(0) \, \matU(\matZ,K,0) \nonumber \\
& - e \, \sum_{k=0}^{K-1} \matT_M(0) \, \matU_M'(\matZ,k,0)  - e \, \matT(0) \, \matU'(\matZ,K,0) \nonumber \\
& - e \, \sum_{k=0}^{K-1} \matT'_M(0) \, \matU_M(\matZ,k,0) - e \, \matT'(0) \, \matU(\matZ,K,0) 
 \ ,
\end{align}
with $\matT_M(0) = \matM \, (\matH(0)-\Gamma_1)^{-1} \, $
and $\matT'_M(0) = \matT_M(0)  \, \matM^{-1} \ \matT_M(0)$.
The values $\matU'_M(\matZ,k,0)$ and $\matU'(\matZ,k,0)$ 
appearing in \eqref{eq:phi.rec.insp} can be computed by solving the following linear systems, see Lemma \ref{lm:S.der} in the appendix,
\begin{align*}
\matU_M'(\matZ,k,0)   - \matT_M(0)  \, \matU_M'(\matZ,k,0)   \,  \matZ  -  \matT_M'(0) \, \matU_M(\matZ,k,0) \, \matZ &
  = \matA'(\matZ,k,0)  \\
\matU'(\matZ,k,0)  -  \matT(0)  \, \matU'(\matZ,k,0) \, \matZ  -  \matT'(0) \, \matU(\matZ,k,0) \, \matZ & = \matA'(\matZ,k,0)
\end{align*}
with
$\matA(\matZ,k,s) =  (\matT_{\matL}(s) \, \matS(\matZ, I, \matT_M(s) ) )^k$.
\section{Numerical experiments}
\label{num-experiments}
\definecolor{curveONE}{rgb}{0.2472, 0.24, 0.6}
\definecolor{curveTWO}{rgb}{0.6, 0.24, 0.442893}
\definecolor{curveTHREE}{rgb}{0.6, 0.547014, 0.24}
\definecolor{curveFOUR}{rgb}{0.24, 0.6, 0.33692}
\definecolor{curveFIVE}{rgb}{0.24, 0.353173, 0.6}
\definecolor{curveSIX}{rgb}{0.6, 0.24, 0.563266}
\definecolor{curveSEVEN}{rgb}{0.6, 0.426641, 0.24}
\definecolor{curveEIGHT}{rgb}{0.263452, 0.6, 0.24}
\definecolor{curveNINE}{rgb}{0.24, 0.473545, 0.6}
\definecolor{curveTEN}{rgb}{0.516361, 0.24, 0.6}

In this section we show some numerical examples, where we compute the stationary sojourn time distribution
for a system with slow rate $\mu_0=1$ and high rate $\mu_1=3/2$.

In Figures \ref{fig:imm-sw-stable} and \ref{fig:imm-sw-unstable}, it is shown how the sojourn time distribution depends on the threshold $K$ for the case of immediate switching times.
In the first example, $\lambda < \mu_0 < \mu_1$, which implies that the system is stable for both service rates. Therefore, when $K\to\infty$, one can appreciate that the sojourn time distribution approaches the one of an $M/M/1$ system with fixed service rate $\mu_0$ (shown as dashed black line in Figure \ref{fig:imm-sw-stable}).
In the second example, we have $\lambda \in [\mu_0,\mu_1)$. In particular, we have chosen $\lambda=\mu_0=1$, implying that the system approaches instability as $K\to\infty$.

\begin{figure}[h]%
\centering
\begin{minipage}{0.45\textwidth}
\includegraphics[width=\textwidth]{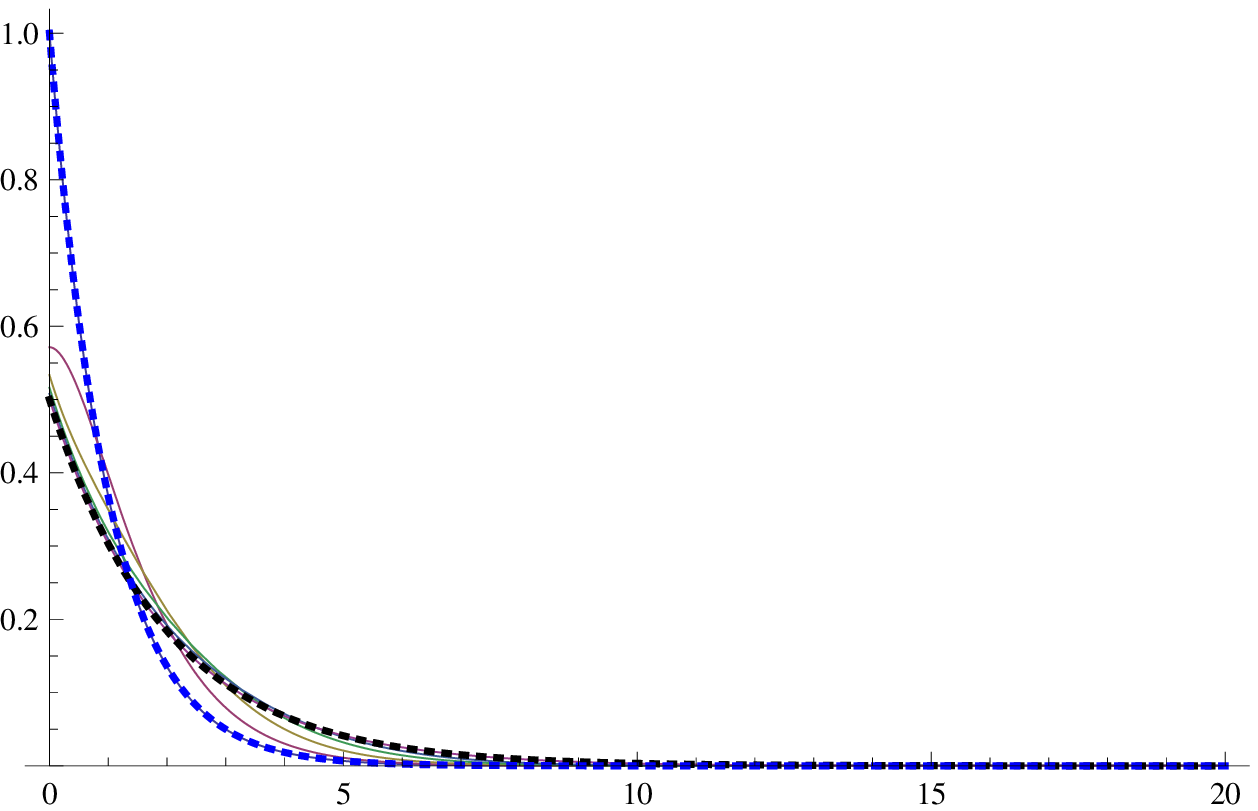}
  \caption{$\lambda = 1/2$}
  \label{fig:imm-sw-stable}
\end{minipage}%
\qquad
\begin{minipage}{0.45\textwidth}
  \includegraphics[width=\textwidth]{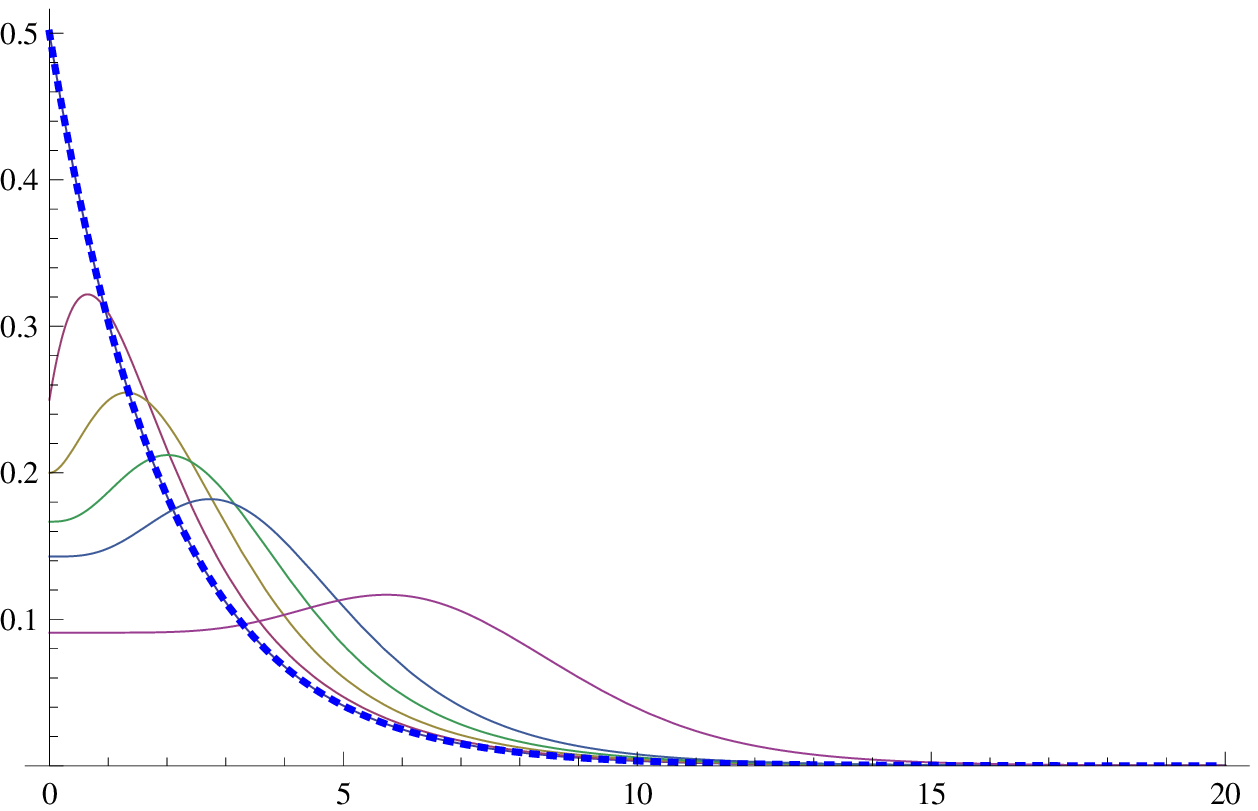}
  \caption{$\lambda = 1$}
  \label{fig:imm-sw-unstable}
\end{minipage}%

\vspace{.25cm}
Sojourn time distribution for: $\mu_0 = 1$, $\mu_1 = 3/2$ and %
 $K\in\{%
{\color{blue} 0},
{\color{curveTWO} 1},
{\color{curveTHREE} 2},
{\color{curveFOUR} 3},
{\color{curveFIVE} 4},
{\color{curveSIX} 8},
{\color{black} \infty}
 \}$
 \label{fig:immediate-switching}
\end{figure}%

Figures \ref{fig:exp-sw-1/100} -- \ref{fig:exp-sw-1000}, depict the sojourn time distribution for the case of exponential distributed inspection times.
These figures refer to the case when  $\lambda \in [\mu_0,\mu_1)$, and again, one can notice that as $K\to\infty$, the system becomes unstable.
It is worth to notice that, when $K=0$, the curve does not coincide with the $M/M/1$ with constant service rate $\mu_1$ (shown as dashed blue line),
since in the system with exponential switching, when inspection finds the system empty, the server switches to the slow rate and does not switch back till another inspection occurs.
When $\gamma=1000$, the system switches almost immediately and therefore the sojourn time distribution is very close to the one of the pure $M/M/1$ system.

\begin{figure}[h]%
\centering
\begin{minipage}{0.45\textwidth}
\includegraphics[width=\textwidth]{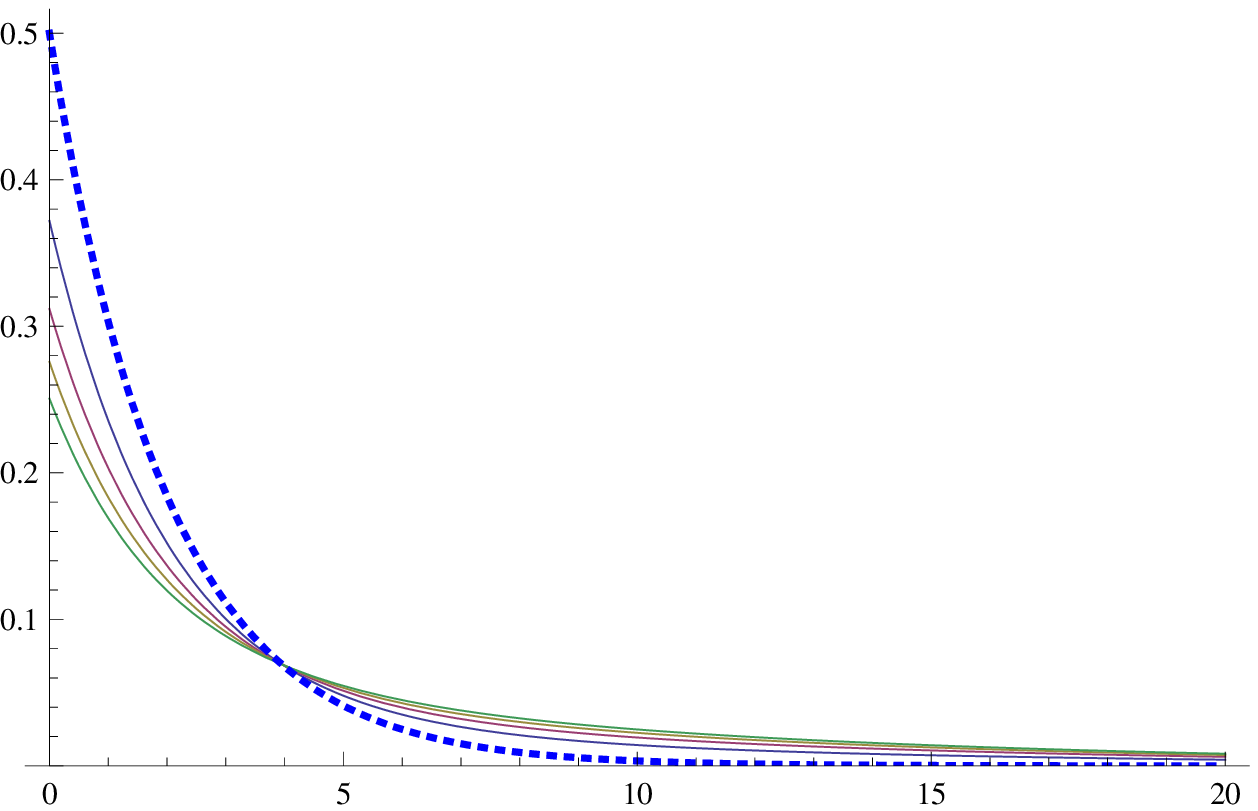}
  \caption{$\gamma = 1/100$}
  \label{fig:exp-sw-1/100}
\end{minipage}%
\qquad
\begin{minipage}{0.45\textwidth}
  \includegraphics[width=\textwidth]{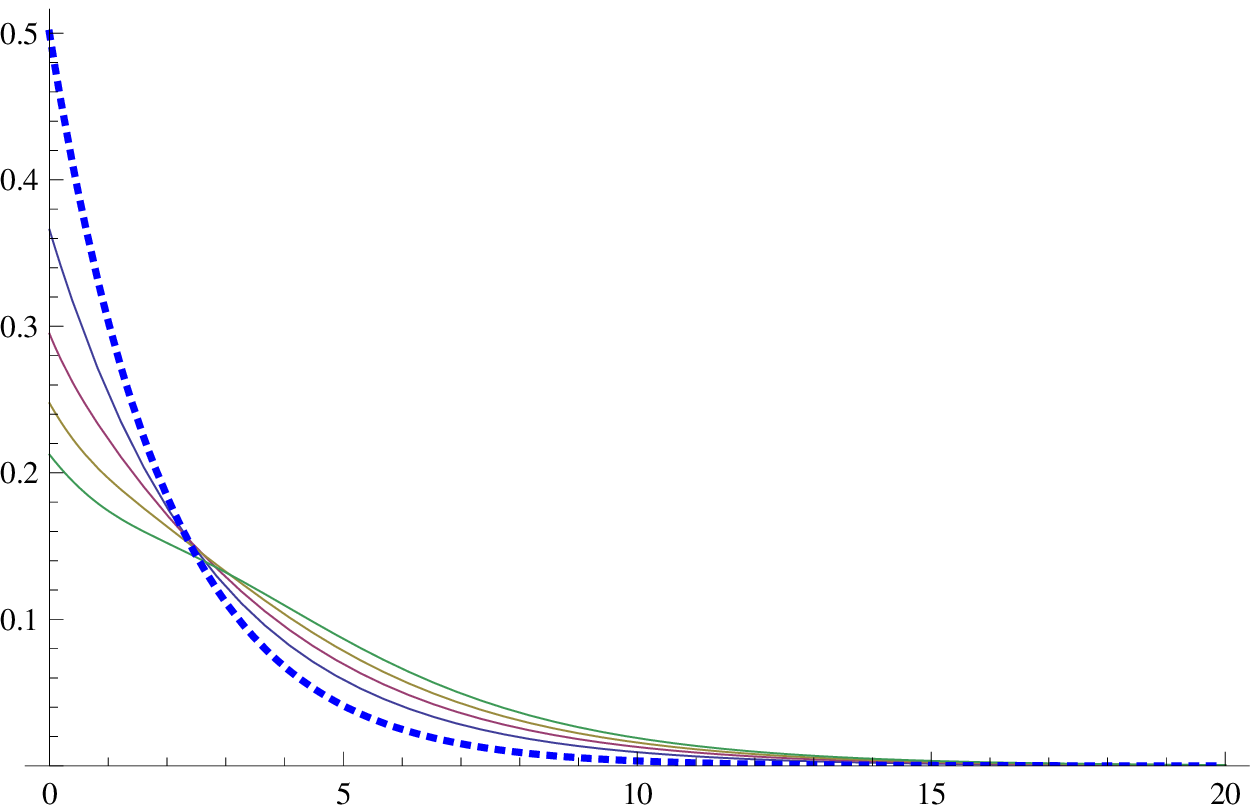}
  \caption{$\gamma = 1/10$}
  \label{fig:exp-sw-1/10}
\end{minipage}%
\\%
\begin{minipage}{0.45\textwidth}
\includegraphics[width=\textwidth]{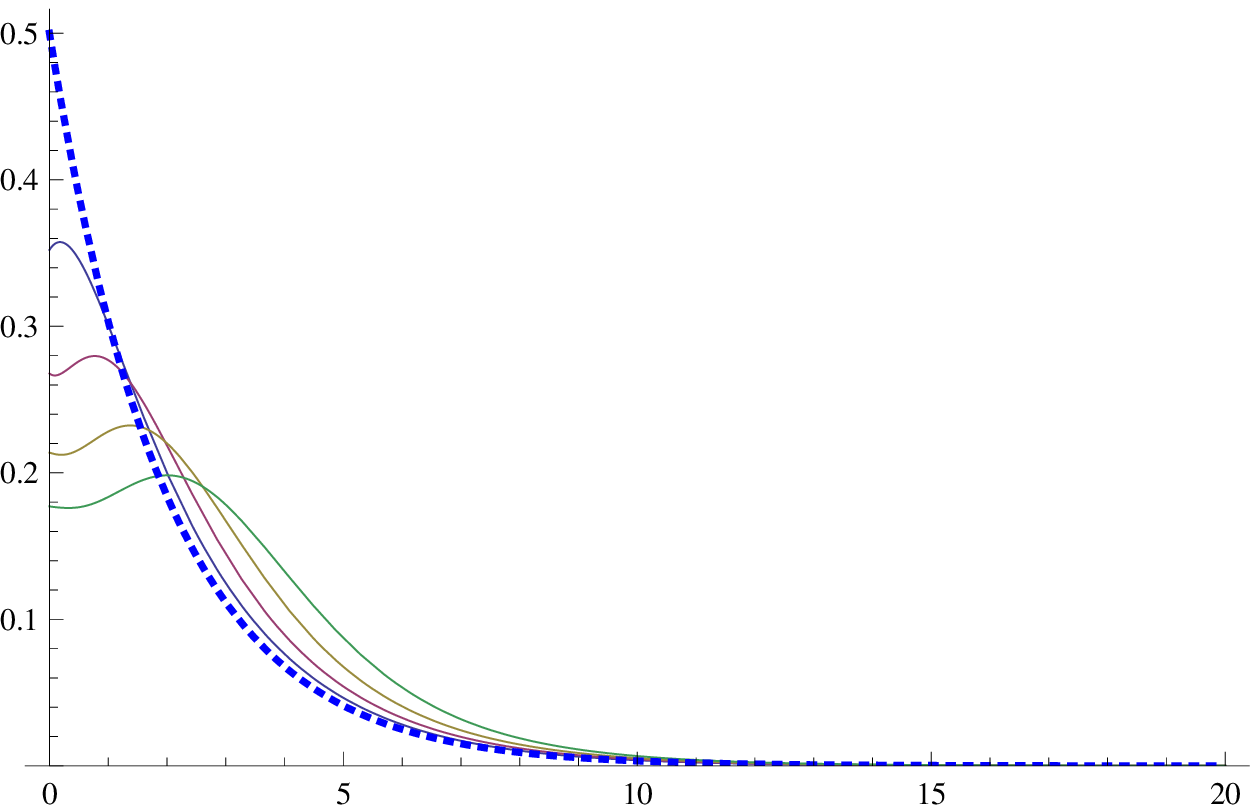}
  \caption{$\gamma = 1$}
  \label{fig:exp-sw-1}
\end{minipage}%
\qquad
\begin{minipage}{0.45\textwidth}
  \includegraphics[width=\textwidth]{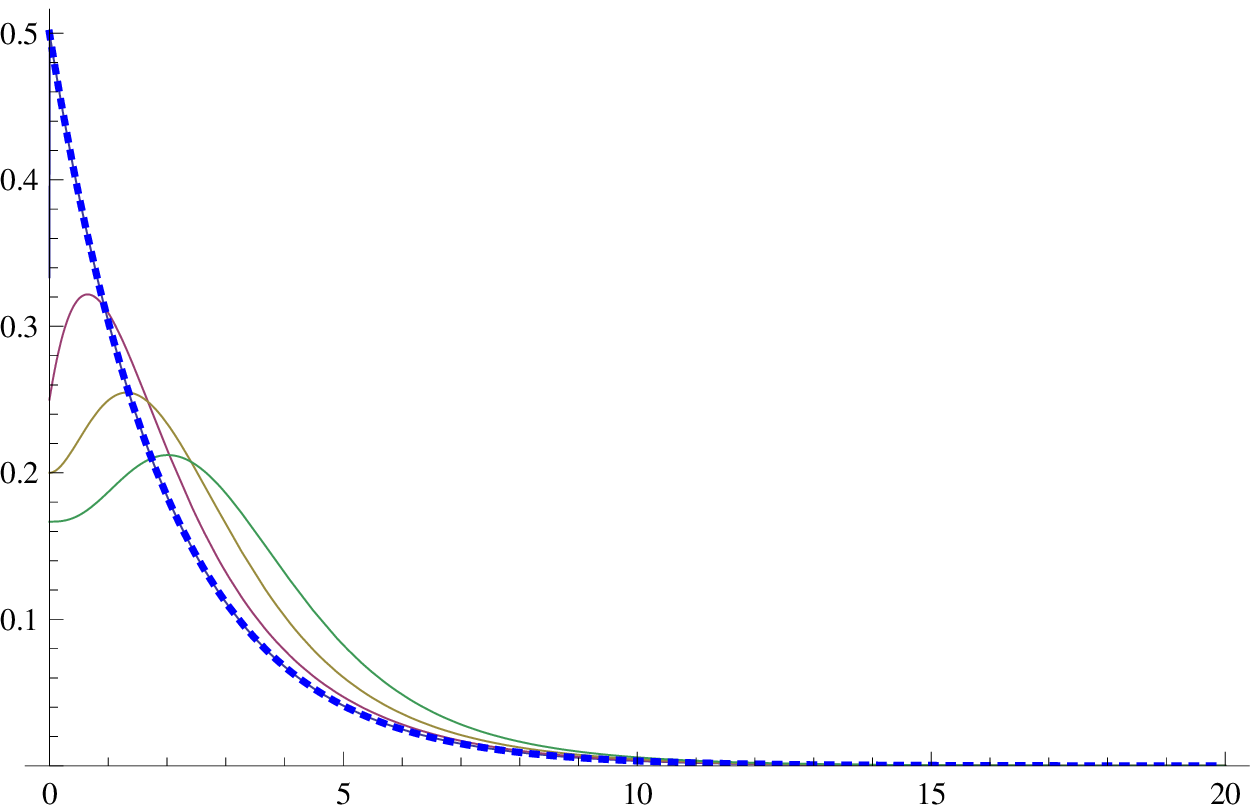}
  \caption{$\gamma = 1000$}
  \label{fig:exp-sw-1000}
\end{minipage}%

\vspace{.25cm}
Sojourn time distribution for: $\lambda = 1$, $\mu_0 = 1$, $\mu_1 = 3/2$ and
 $K\in\{%
{\color{blue} *},
{\color{curveONE} 0},
{\color{curveTWO} 1},
{\color{curveTHREE} 2},
{\color{curveFOUR} 3}
 \}$
 \label{fig:exponential-switching}
\end{figure}%

In Figures \ref{fig:exp-tp-imm-sw-1} and \ref{fig:exp-tp-imm-sw-3}, we plot again the results for $\lambda=1$, $\mu_0=1$ and $\mu_1=3/2$, but compare different values of $\gamma$'s.
One can see  that for $\gamma$ approaching $\lambda$, the system behaves very close to a system with immediate switching (shown as dashed black line).
Indeed, for values of $\gamma>1$, one cannot distinguish the curve from the limiting one.
This suggests that, checking the state of the system at a rate comparable to the arrival rate, can be considered from the point of view of the sojourn time as an immediate switching.
This could be used in the design phase of the system, when balancing between costs (by reducing service rate) and performance (by increasing the service and inspection rate).

\begin{figure}[h]
\centering
\begin{minipage}{0.45\textwidth}
\includegraphics[width=\textwidth]{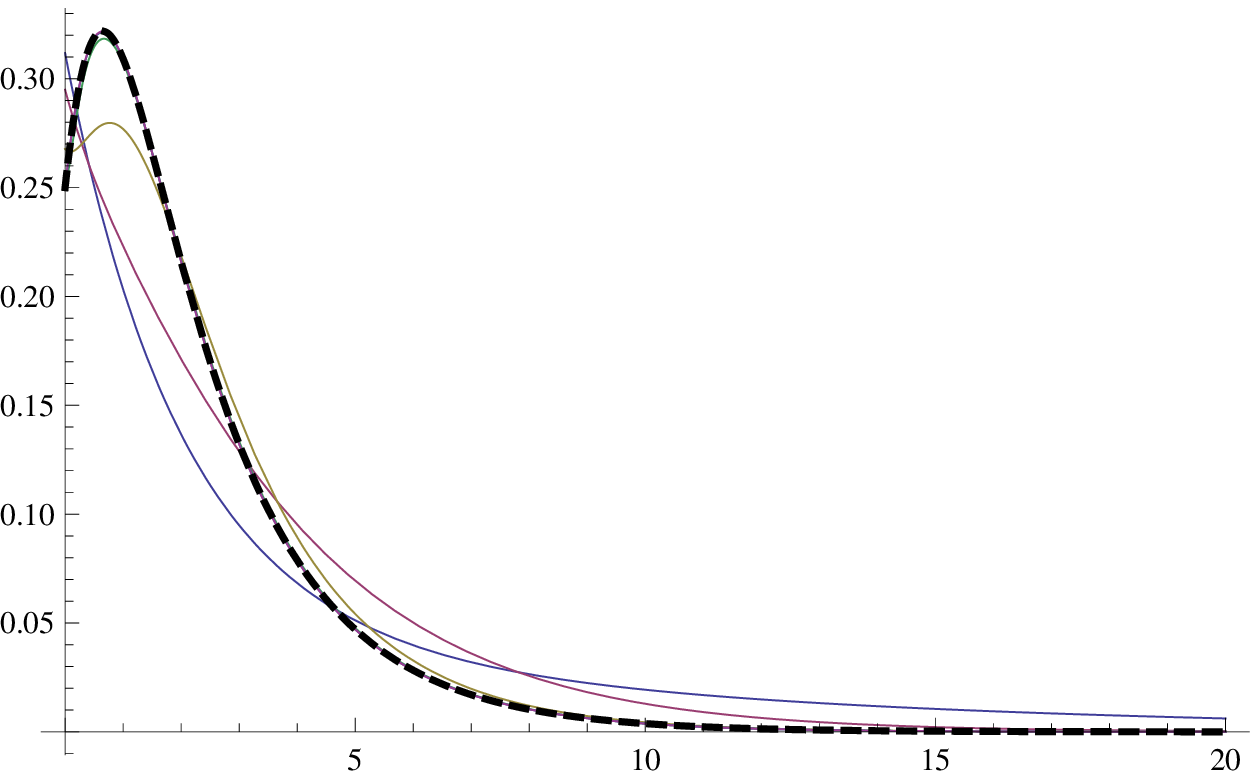}
  \caption{$K=1$}
  \label{fig:exp-tp-imm-sw-1}
\end{minipage}%
\qquad
\begin{minipage}{0.45\textwidth}
  \includegraphics[width=\textwidth]{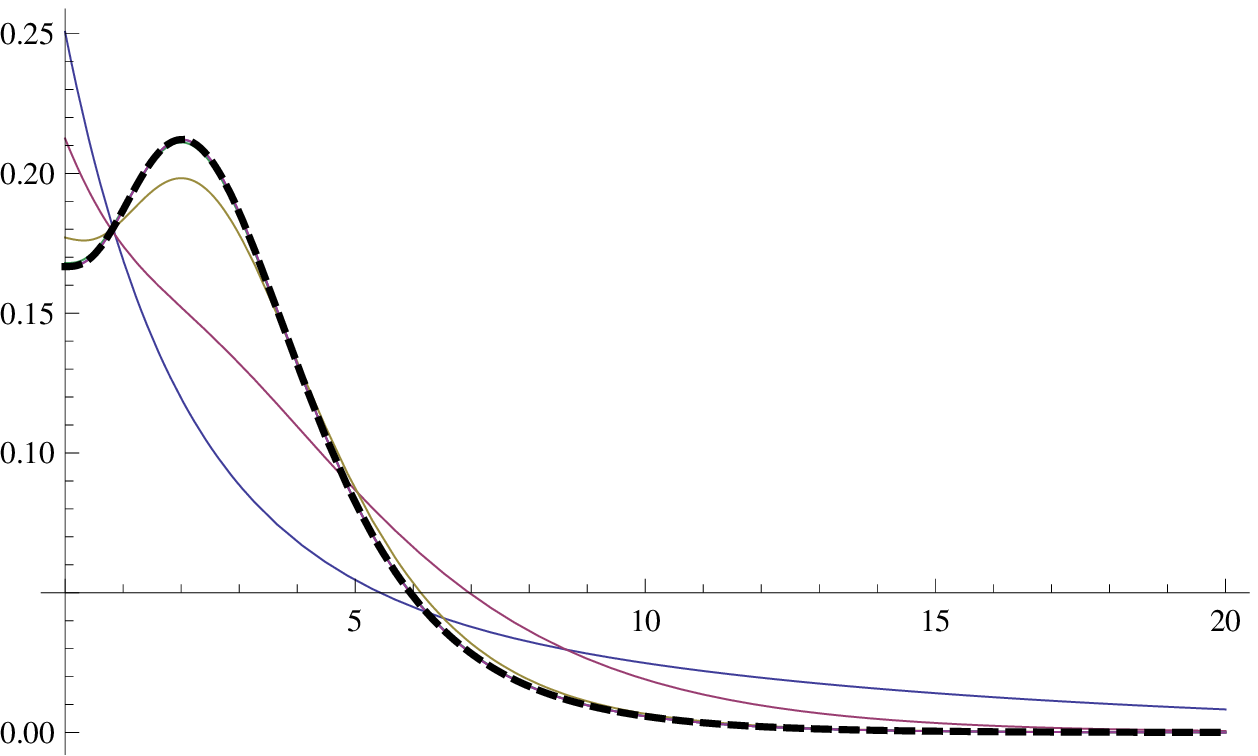}
  \caption{$K=3$}
  \label{fig:exp-tp-imm-sw-3}
\end{minipage}%

\vspace{.25cm}
Sojourn time distribution for: $\lambda = 1$, $\mu_0 = 1$, $\mu_1 = 3/2$ and
 $\gamma\in\{%
{\color{curveONE} 1/100},
{\color{curveTWO} 1/10},
{\color{curveTHREE} 1},
{\color{curveFOUR} 10},
{\color{curveFIVE} 100},
{\color{curveSIX} 1000},
{\color{black} \infty}
 \}$
 \label{fig:exponential-to-immediate-switching}
\end{figure}

\section{Conclusions}
\label{conclusions}
In this paper we studied the sojourn time distribution in an exponential single-server queueing system. Service is in order of arrival, and it is provided at low or high rate, which can be adapted at exponential inspection times, depending on the number of customers in the system.
To determine the Laplace transform of the stationary sojourn time distribution, we proposed a new methodological tool, that is \emph{matrix generating functions}. 
We used this tool to compute the Laplace transform of the sojourn time distribution in the system with inspection times.
Its expression is obtained recursively and shows a rational form that allows an immediate inverse-transformation. 
Numerical computations have shown, as expected,  that if the inspection rate is large, 
the sojourn time of the system with inspections converges to the one of the system with immediate switching.

We believe that the power of the matrix generating functions lies in its flexibility to analyze generalizations to phase-type services and inspection times.
An interesting and promising direction for future research is to explore the applicability of this tool to analyze the more general class of quasi-birth-and-death processes \cite{latouche}.

\bibliographystyle{siam}      

\begin{thebibliography}{}

\bibitem{asmussen}
S. Asmussen, {Applied Probability and Queues}, {Springer, New York}, (2003).

\bibitem{bekker1}
R. Bekker, S.C. Borst, O.J. Boxma, O. Kella, {Queues with workload-dependent arrival and service rates}, {Queueing Systems}, 46, 537-556 (2004).

\bibitem{bekker2}
R. Bekker, O.J. Boxma, {An $M/G/1$ queue with adaptable service speed}, {Stochastic Models}, 23,  373-396 (2007).

\bibitem{bekker3}
R. Bekker, O.J. Boxma, J.A.C. Resing, {Queues with service speed adaptations}, {Statistica Neerlandica}, 62,  441-457 (2008).

\bibitem{boxma}
O.J. Boxma,  B.H.B. Jonsson, J.A.C. Resing, V. Shneer, {An alternating risk reserve process - Part II}, {Markov Processes and Related Fields}, 16,  425-446 (2010).

\bibitem{cohen1}
J.W. Cohen, {On the optimal switching level for an $M/G/1$ queueing system},
{Stochastic Processes and Their Applications}, 4, 297-316 (1976).

\bibitem{cohen2}
J.W. Cohen, {The Single Server Queue}, {North-Holland, Amsterdam} (1982).

\bibitem{falin1}
G.I. Falin, {Aggregate arrival of customers in a one-line system with repeated calls},
J. Ukrain. Math., 28, 561-565 (1976).

\bibitem{falin2}
G.I. Falin, {On the Waiting-Time Process in a Single-Line Queue with Repeated Calls},
J. Applied Probability, 23(1), 185-192 (1986).

\bibitem{falin3}
G.I. Falin, {A Survey of Retrial Queues},
Queueing Syst. Theory Appl., 7(2), 127-167 (1990).

\bibitem{gaver}
D.P. Gaver, R.G. Miller, {Limiting distributions for some storage problems}, In:
Studies in Applied Probability and Management Science, 110-126 (1962).

\bibitem{harrison}
J.M. Harrison, S.I. Resnick, {The stationary distribution and first exit probabilities of a storage process with general release rule},
Mathematics of Operations Research, 1, 347-358 (1976).

\bibitem{keilson}
J. Keilson, L.D. Servi, 
{A distributional form of Little's Law},
Operations Research Letters, 7, 223-227 (1988).

\bibitem{latouche}
G. Latouche, V. Ramaswami,
{Introduction to Matrix Analytic Methods in Stochastic Modeling},
SIAM (1999).

\bibitem{mitrani}
I. Mitrani, {Managing performance and power consumption in a server farm},
Annals of Operations Research, 202, 121-134 (2013).

\bibitem{Neuts94}
M.F. Neuts, {Matrix-geometric solutions in stochastic models - an algorithmic approach}, {Dover Publications}, (1994).

\bibitem{wolff}
R.W. Wolff,
{Poisson arrivals see time averages},
Operations Research, 30, 223-231 (1982).

\end{thebibliography}


\appendix
\section{Technical proofs}\label{sec:appendix}
\begin{proof}[of Lemma \ref{lm:rec.psi}]
We can rewrite  the expression \eqref{eq:lap.rel} in the following form
\begin{equation}\label{eq:lap.rel.app}
\psi(n,m,s) = a_s(n+m) \, \psi(n-1,m,s) + b_s(n+m) \, \psi(n,m+1,s) \ .
\end{equation}
With $m=K-1$, equation \eqref{eq:rec.psi} becomes
\begin{align*}
\psi(n,K-1,s)
&= B_s(n+K-1,1) \,\psi(n,K,s) \\
 & \quad + \sum_{k=K-1}^{K-1} a_s(n+k) B_s(n+K-1,k-K+1) \, \psi(n-1,k,s) \\
&= b_s(n+K-1) \,\psi(n,K,s) + a_s(n+K-1) \, \psi(n-1,K-1,s)
\end{align*}
and therefore it holds true.
Now assume that equation \eqref{eq:rec.psi} is valid for $m+1$. Then by \eqref{eq:lap.rel.app},
\begin{align*}\label{eq:lap.rec.app}
\psi(n,m,s)
= & a_s(n+m) \, \psi(n-1,m,s)
    + b_s(n+m) \, B_s(n+m+1,K-1-m) \, \psi(n,K,s) \\
  & + b_s(n+m) \sum_{k=m+1}^{K-1} a_s(n+k) \, B_s(n+m+1,k-m-1) \, \psi(n-1,k,s)  \\
= & a_s(n+m) \, B_s(n+m,0)\, \psi(n-1,m,s)
    + B_s(n+m,K-m) \, \psi(n,K,s) \\
  & + \sum_{k=m+1}^{K-1} a_s(n+k) \, B_s(n+m,k-m) \, \psi(n-1,k,s)  \\
= & B_s(n+m,K-m) \, \psi(n,K,s)
    + \sum_{k=m}^{K-1} a_s(n+k) \, B_s(n+m,k-m) \, \psi(n-1,k,s)
\ .
\end{align*}
where we have used the fact that the definition of $B_s(k,h)$ implies that
$$b_s(k) \, B_s(k+1,h) = B_s(k,h+1) \ .$$
\end{proof}

\begin{proof}[of Lemma \ref{lm:S.function}]
By substituting in \eqref{eq:S.syst} the expression for $\matS$ given in \eqref{def:s} we get
\begin{align*}
\matB \, \matS \, \matZ = \sum_{h=0}^\infty \matB^{h+1} \, \matA \, \matZ^{h+1}  
= \sum_{h=0}^\infty  \matB^h \, \matA \, \matZ^h  \, - \matA = \matS - \matA
\end{align*}
which implies that the matrix $\matS$ is a solution of the matrix equation.

By assuming that $\matS$ and $\matS'$ are two solutions of this matrix equation, we would have that $\matY=\matS-\matS'$ is the solution of the following system
$$\matY = \matZ \, \matY \, \matB.$$
Iterating the last equation we get that
$$\matY = \matZ^n \, \matY \, \matB^n \quad n\geq0 \ .$$
This term converges to $0$ as $n\to\infty$ by the assumptions on the eigenvalues of the matrices $\matZ$ and $\matB$.
It follows that  $\matY=0$ and hence $\matS$ is unique.
\end{proof}

\begin{proof}[of Lemma \ref{lm:rel.S}]
The result follows from the following algebraic manipulations
\begin{align*}
&\sum_{h=0}^\infty \matS(\matT_2 \, \matT_1^h \,\matZ \, \matT_1^{-h} \, \matT_2^{-1}, \matA, \matB)
 \, \matT_2 \, \matT_1^h \, \matZ^h \\
& \quad = \sum_{h=0}^\infty 
\sum_{k=0}^\infty  \matB^k \, \matA \, (\matT_2 \, \matT_1^h \,\matZ \, \matT_1^{-h} \, \matT_2^{-1})^k  
\, \matT_2 \, \matT_1^h \, \matZ^h \\
& \quad = \sum_{h=0}^\infty 
\sum_{k=0}^\infty  \matB^k \, \matA \, \matT_2 \, \matT_1^h \,\matZ^k \, \matT_1^{-h} \, \matT_2^{-1}  
\, \matT_2 \, \matT_1^h \, \matZ^h \\
& \quad = \sum_{h=0}^\infty 
\sum_{k=0}^\infty  \matB^k \, \matA \, \matT_2 \, \matT_1^h \,\matZ^{k+h} 
= \sum_{k=0}^\infty \matB^k \, \matA \, \matT_2 \left(\sum_{h=0}^\infty \matT_1^h \,\matZ^h\right) \matZ^k \\
& \quad = \sum_{k=0}^\infty \matB^k \, \matA \, \matT_2 \, \matS(\matZ,I, \matT_1) \, \matZ^k 
= \matS(\matZ, \matA \, \matT_2 \, \matS(\matZ,I, \matT_1), \matB) 
\end{align*}
\end{proof}

\begin{lemma}\label{lm:S.der}
Let  $\matS(s) = \matS(\matZ,\matA(s),\matB(s))$, then its derivative in $s$ can be computed as the solution of the following linear system.
\begin{equation}\label{eq:der.S}
\matS'(s) - \matB(s) \, \matS'(s) \, \matZ - \matB'(s)\, \matS(s) \, \matZ = \matA'(s)
\end{equation}
\end{lemma}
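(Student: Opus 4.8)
The plan is to start from the characterization of $\matS(\matZ,\matA,\matB)$ established in Lemma~\ref{lm:S.function}, namely that $\matS(s)$ is the unique solution of the Sylvester-type matrix equation
\begin{equation*}
\matS(s) - \matB(s)\,\matS(s)\,\matZ = \matA(s)\,,
\end{equation*}
and then simply differentiate this identity with respect to $s$. The parameter $\matZ$ does not depend on $s$, so by the product rule applied to the term $\matB(s)\,\matS(s)\,\matZ$ we obtain
\begin{equation*}
\matS'(s) - \matB'(s)\,\matS(s)\,\matZ - \matB(s)\,\matS'(s)\,\matZ = \matA'(s)\,,
\end{equation*}
which is exactly \eqref{eq:der.S} after rearranging. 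So the core of the argument is one line of matrix calculus.

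The only real content beyond that is to argue that $\matS'(s)$ is well defined (i.e.\ that $\matS$ is differentiable in $s$) and that the displayed linear system has a \emph{unique} solution, so that solving it genuinely recovers $\matS'(s)$. For differentiability I would note that for each fixed $\matZ$ the map $\matX \mapsto \matX - \matB\,\matX\,\matZ$ is a linear operator on matrices, and under the eigenvalue hypotheses of Lemma~\ref{lm:S.function} (eigenvalues of $\matB$ strictly inside the unit disk, eigenvalues of $\matZ$ in the closed disk) this operator is invertible, since its eigenvalues are $1 - \beta\zeta$ with $\beta,\zeta$ ranging over eigenvalues of $\matB,\matZ$ respectively, and $|\beta\zeta|<1$. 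Hence $\matS(s) = \mathcal{L}_s^{-1}(\matA(s))$ where $\mathcal{L}_s(\matX) = \matX - \matB(s)\,\matX\,\matZ$, and this depends smoothly on $s$ whenever $\matA(s)$ and $\matB(s)$ do (invertibility is an open condition and persists under small perturbations). The same invertibility of $\mathcal{L}_s$ shows that \eqref{eq:der.S}, viewed as $\mathcal{L}_s(\matS'(s)) = \matA'(s) + \matB'(s)\,\matS(s)\,\matZ$ with known right-hand side, has a unique solution, which must therefore coincide with the derivative computed above.

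I expect no serious obstacle here; the statement is essentially a corollary of Lemma~\ref{lm:S.function} together with the chain rule. The one point that requires a moment of care is making precise that the eigenvalue condition on $\matB(s)$ is preserved under differentiation/perturbation so that $\mathcal{L}_s$ stays invertible in a neighbourhood — but since the lemma is applied in the paper only at points where the hypotheses hold by construction (e.g.\ $\matB = \matT_M(s)$ or $\matT(s)$, which have spectral radius below one), this is automatic and can be stated in a single sentence. I would therefore present the proof as: differentiate the defining equation; invoke uniqueness from Lemma~\ref{lm:S.function} (equivalently, invertibility of $\mathcal{L}_s$) to conclude that the resulting linear system determines $\matS'(s)$ uniquely.

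\begin{proof}[of Lemma \ref{lm:S.der}]
By Lemma \ref{lm:S.function}, the matrix $\matS(s) = \matS(\matZ,\matA(s),\matB(s))$ is the unique solution of
\begin{equation*}
\matS(s) - \matB(s) \, \matS(s) \, \matZ = \matA(s) \ .
\end{equation*}
Since $\matZ$ does not depend on $s$, differentiating this identity with respect to $s$ and applying the product rule to the term $\matB(s)\,\matS(s)\,\matZ$ yields
\begin{equation*}
\matS'(s) - \matB'(s) \, \matS(s) \, \matZ - \matB(s) \, \matS'(s) \, \matZ = \matA'(s) \ ,
\end{equation*}
which is precisely \eqref{eq:der.S}. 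It remains to observe that this linear system determines $\matS'(s)$ uniquely. Indeed, writing $\mathcal{L}(\matX) = \matX - \matB(s)\,\matX\,\matZ$, the system reads $\mathcal{L}(\matS'(s)) = \matA'(s) + \matB'(s)\,\matS(s)\,\matZ$ with known right-hand side, and by the argument in the proof of Lemma \ref{lm:S.function} the operator $\mathcal{L}$ is invertible under the stated eigenvalue assumptions on $\matZ$ and $\matB(s)$, since its eigenvalues are of the form $1-\beta\zeta$ with $|\beta\zeta|<1$. Hence $\matS'(s)$ is the unique solution of \eqref{eq:der.S}.
\end{proof}
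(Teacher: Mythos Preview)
Your proof is correct and follows essentially the same approach as the paper: both start from the defining equation $\matS(s)-\matB(s)\,\matS(s)\,\matZ=\matA(s)$ of Lemma~\ref{lm:S.function} and differentiate it in $s$; the paper writes this out via finite differences (subtracting the identities at $s+h$ and $s$, dividing by $h$, and letting $h\to0$), whereas you invoke the product rule directly. Your added remark on uniqueness of the solution via invertibility of $\mathcal{L}(\matX)=\matX-\matB(s)\,\matX\,\matZ$ is a welcome clarification that the paper leaves implicit.
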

\begin{proof}
By \eqref{eq:S.syst} we have that
\begin{align}
\matS(\matZ,\matA(s+h),\matB(s+h)) - \matB(s+h) \, \matS(\matZ,\matA(s+h),\matB(s+h)) \, \matZ &= \matA(s+h) \\
\matS(\matZ,\matA(s),\matB(s)) - \matB(s) \, \matS(\matZ,\matA(s),\matB(s)) \, \matZ &= \matA(s) \ .
\end{align}
Subtracting the expressions above, adding and removing $\matB(s+h) \, \matS(\matZ,\matA(s),\matB(s)) \, \matZ $ we have
\begin{equation*}
\Delta \matS(s) - \matB(s+h) \, \Delta \matS(s)   \, \matZ  - \Delta \matB(s) \, \matS(\matZ,\matA(s),\matB(s)) \, \matZ = \Delta \matA(s)
\end{equation*}
with $\Delta \matS(s) = \matS(\matZ,\matA(s+h),\matB(s+h)) -  \matS(\matZ,\matA(s),\matB(s)) $ and similar notations for $\Delta \matA(s)$ and $\Delta \matB(s)$.
Dividing for $h$ and letting $h\to0$ the result follows.
\end{proof}

\end{document}